\DeclareMathAlphabet{\mathpzc}{OT1}{pzc}{m}{it}
\newtheorem{theorem}{Theorem}[section]
\newtheorem*{theorem*}{Theorem}
\newtheorem{proposition}[theorem]{Proposition}
\newtheorem{lemma}[theorem]{Lemma}
\newtheorem*{lemma*}{Lemma}
\newtheorem{corollary}[theorem]{Corollary}
\newtheorem*{corollary*}{Corollary}
\newtheorem*{conjecture*}{Conjecture}
\theoremstyle{definition}
\newtheorem{example}[theorem]{Example}
\newtheorem{notation}[theorem]{Notation}
\theoremstyle{remark}
\newtheorem{remark}[theorem]{Remark}
\DeclareMathOperator{\tr}{tr}
\DeclareMathOperator{\Diag}{Diag}
\DeclareMathOperator{\adj}{adj}
\DeclareMathOperator{\id}{id}
\numberwithin{equation}{section}
\begin{document}
\title{Some Landau--Ginzburg models   viewed as rational maps}

\author{  E. Ballico, E.  Gasparim, L. Grama,  {\tiny and} L.   San Martin}

\begin{abstract}\cite{GGSM2} showed  that  height functions give  adjoint orbits of semisimple Lie algebras 
 the structure of symplectic Lefschetz fibrations  (superpotential of 
the LG model in the language of mirror symmetry). 
We describe  how to extend  the superpotential to compactifications.
Our results explore the geometry of the adjoint orbit from 2 points of view: algebraic geometry and Lie theory.
\end{abstract}

\maketitle
\tableofcontents

\section{Motivation and statement of results}
The data  $W \colon X \rightarrow \mathbb C$  of a  manifold together with a complex function 
is commonly known in the literature as a {\it Landau--Ginzburg model};  $W$ is called the {\it superpotential}.
Such LG models are fundamental ingredients to the 
study of  questions motivated by the Homological Mirror Symmetry conjecture of Kontsevich \cite{Ko}. 
When the  superpotential is defined  over a symplectic manifold,
this involves the
construction of a Fukaya--Seidel category.  The objects of this category 
are associated to the singularities of $W$ (Lagrangian vanishing cycles), see \cite{Se}.
These questions motivated us   to construct examples of LG models
and to study their symplectic geometry  \cite{GGSM1}, \cite{GGSM2}. Here we explore these examples from 
different points of view, namely, we
are interested in the algebraic geometry and Lie theory of the smooth variety $X$. 

Part of the homological mirror symmetry   conjecture 
describes a duality between algebraic varieties and  symplectic LG models. 
 Subsequently Clarke \cite{Cl} showed that the conjecture can be interpreted
 in further generality in such a way that both sides are LG models.  
According to  whether $X$ is considered with a  K\"ahler structure 
and  algebraic potential or whether $X$ is considered with a symplectic structure
and  holomorphic potential these are called {\it $B$-side LG model}, 
or {\it $A$-side LG model}, see \cite {AAK}  or \cite{KKP}. In such terminology, the work of \cite{GGSM1} provides
a large family of examples  of 
$A$-side  LG models. 
 Here we look at these examples from the point of view of   $B$-side LG models. 
 
 Our constructions are carried out  
 using Lie theory.
Let $\mathfrak g$ be a complex semisimple Lie algebra with Lie group $G$,  and $\mathfrak h$ the Cartan subalgebra. 
Consider the adjoint orbit  $\mathcal O(H_0)$  of an element $H_ 0 \in \mathfrak h$, 
that is, $$\mathcal O(H_0) := \{\mathrm{Ad} (g) H_0, g \in G\}\text{.}$$ Let $H  \in \mathfrak h$ be a regular element, and 
 $\langle .,. \rangle$ the Cartan--Killing form. 
The main result of \cite{GGSM1} shows that the height function 
$$ \begin{array}{rll} f_H\colon  \mathcal O(H_0) & \rightarrow & \mathbb C\cr
                                             X & \mapsto & \langle X,H \rangle
                                            \end{array}$$
gives the orbit the structure of  a symplectic Lefschetz fibration; thus corresponding to an $A$-side LG model.  
We show here that this height function can also be interpreted as a rational map on a projective compactification of $\mathcal O(H_0)$;
hence corresponding to  a $B$-side LG model. 

In this work we  restrict ourselves to the class of adjoint orbits  which are diffeomorphic 
to cotangent bundles of projective spaces $\mathbb P^n$. This is the simplest case of semisimple orbit, 
yet already presenting somewhat surprising features. A harmonious combination of Lie theory and algebraic geometry
happens naturally in this context, for example,  we shall see that  Lie theory provides rather 
efficient methods  to identify the Segre embedding of a compactification of $\mathcal O(H_0)$. 
This method of carrying along  Lie theory together with algebraic geometry is 
arguably where the core value of our contribution  lies. We put forth the idea that there is much to profit 
from applying Lie theoretical methods to algebraic geometric problems. This work is   a first instance
of what we propose as a long term program. Certainly such combinations of the 2 areas have appeared in the literature
in other contexts, the
particularly new features of our contribution are  the  applications to the study Lefschetz fibrations and LG models.

Our  results  go as follows. 
We take   $G=SL(n+1,\mathbb{C)}$ and consider the adjoint orbit passing through  $\mu=\Diag(n,-1,
\ldots,-1)$, which we denote by  $\mathcal{O}_\mu$. The diffeomorphism type is then $\mathcal{O}_\mu  \sim_{dif} T^* \mathbb P^n$.
In  Section \ref{slfs} we recall the main result of \cite{GGSM1} showing that the height function  with respect to a regular element gives 
the adjoint orbit the structure of a symplectic Lefschetz fibration. 
We then describe the orbit and the regular fibers  of $f_H$ as affine varieties, and consider fibrewise compactifications. In section \ref{alg}  for the case of $\mathfrak {sl}(2, \mathbb C)$ we obtain:
 
 \begin{theorem*} {\bf \ref{sl2}} Let
$X=\mathcal{O}_{(1,-1)}$, $H=\Diag(1,-1)$ and  $f_H: X\to \mathbb{C}$. Then $f_H$ admits a fiberwise compactification 
with fibres isomorphic to  $\mathbb P^1$.
\end{theorem*}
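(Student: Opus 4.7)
The plan is to make $X$ and $f_{H}$ completely explicit as a smooth affine quadric in $\mathbb{A}^{3}$, and then obtain the fibrewise compactification by passing to the projective closure in $\mathbb{P}^{3}$.

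First, writing a traceless $2\times 2$ matrix in the form $M = \bigl(\begin{smallmatrix}a & b\\ c & -a\end{smallmatrix}\bigr)$, the condition that $M$ lies in the orbit of $\mu = \Diag(1,-1)$ is $\det M = -1$, i.e.\ $a^{2}+bc = 1$. Thus
\[
X = \{(a,b,c)\in \mathbb{A}^{3} : a^{2}+bc = 1\}
\]
is a smooth affine quadric, and a direct trace computation gives $f_{H}(a,b,c) = 2a$ (up to normalisation of the Killing form). The fibre $f_{H}^{-1}(t)$ is therefore the affine conic $\{bc = 1 - t^{2}/4\}$, a copy of $\mathbb{C}^{*}$ for every $t \neq \pm 2$.

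The natural candidate for the compactification is the projective closure $\bar X = \{A^{2}+BC = Z^{2}\} \subset \mathbb{P}^{3}$ in coordinates $[A{:}B{:}C{:}Z]$. This is a smooth quadric surface, hence isomorphic to $\mathbb{P}^{1}\times \mathbb{P}^{1}$ via the Segre form $Z_{0}Z_{3} = Z_{1}Z_{2}$ (obtained by a linear change of variables). The boundary $\bar X \cap \{Z = 0\}$ is the smooth conic $\{A^{2}+BC = 0\}$, another copy of $\mathbb{P}^{1}$. The height function extends to the rational map
\[
\bar f_{H} : \bar X \dashrightarrow \mathbb{P}^{1}, \qquad [A{:}B{:}C{:}Z]\mapsto [2A{:}Z],
\]
whose generic fibre is the hyperplane section $\bar X \cap \{2A = tZ\}$, a smooth plane conic in $\mathbb{P}^{2}$ and therefore isomorphic to $\mathbb{P}^{1}$. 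This realises the fibrewise compactification: each $\mathbb{C}^{*}$-fibre of $f_{H}$ is completed to $\mathbb{P}^{1}$ by adjoining its two boundary points on $\{Z=0\}$.

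The main technical points to handle are (i) the base locus of $\bar f_{H}$, given by the two points $[0{:}1{:}0{:}0]$ and $[0{:}0{:}1{:}0]$ where $A = Z = 0$, which one eliminates by a pair of blow-ups; and (ii) the critical fibres at $t = \pm 2$, where the closed fibre in $\bar X$ degenerates into two lines meeting at a point. If the statement is to hold literally for every fibre, I would follow the blow-ups by elementary transformations contracting one component of each reducible fibre, arriving at a Hirzebruch surface fibred over $\mathbb{P}^{1}$ whose $\mathbb{P}^{1}$-bundle projection extends $f_{H}$. The only nontrivial step is then verifying that this final surface and fibration yield the asserted fibrewise compactification.
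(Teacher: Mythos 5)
Your proposal is correct and follows essentially the same route as the paper: realise the orbit as the affine quadric $x^2+yz=1$ in $\mathbb{C}^3$ with $f_H=2x$, homogenise, and observe that each regular fibre closes up to a smooth plane conic, hence a $\mathbb{P}^1$ obtained by adjoining two points at infinity. The paper only carries this out explicitly for the fibre over $0$ (identifying $\overline{X_0}$ with $\mathbb{P}^1$ via the degree--genus formula and listing the two added points), so your further discussion of the generic fibre $bc=1-t^2/4$, the $\mathbb{P}^1\times\mathbb{P}^1$ structure of the closure, the base locus $\{A=Z=0\}$, and the reducible fibres over $t=\pm 2$ goes beyond what is written there, but is consistent with it and not needed for the statement as the paper interprets it, namely the compactification of the regular fibres.
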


In section \ref{inc} we present several ways to interpret the adjoint orbit, thus illustrating the interactions between 
Lie theory and algebraic geometry. In section  \ref{zar}  we 
describe (Zariski)  open charts  for  $\mathcal{O}_\mu$ in terms of  Bruhat cells.   

\begin{corollary*}{\bf \ref{bru}}
The domains of the parametrizations  $D_j$ corresponding to  the  Bruhat cells are open 
and dense in  $\mathcal{O}_\mu$.
\end{corollary*}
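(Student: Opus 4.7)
The plan is to combine the irreducibility of the orbit with the standard fact that the top-dimensional Bruhat cells in a flag variety are open and dense. The key observation is that $\mathcal{O}_\mu = G/Z_G(\mu)$ is a homogeneous space for the connected group $G = SL(n+1,\mathbb{C})$, hence irreducible. Consequently, every nonempty Zariski open subset of $\mathcal{O}_\mu$ is automatically dense, so it suffices to check that each $D_j$ is open and nonempty.

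First I would invoke the natural projection $\pi \colon \mathcal{O}_\mu \to G/P \cong \mathbb{P}^n$, where $P$ is the parabolic stabilizing the flag determined by $\mu = \Diag(n, -1, \ldots, -1)$. The Bruhat decomposition of $\mathbb{P}^n$ with respect to a Borel subgroup yields cells of dimensions $0, 1, \ldots, n$; there is a unique top-dimensional cell of dimension $n$ which is the complement of a hyperplane and hence open and dense in $\mathbb{P}^n$. Varying the Borel through the $(n+1)$ Weyl conjugates corresponding to the permutations that send each standard basis vector to the first, one obtains $n+1$ top-dimensional Bruhat cells; these are precisely the standard affine charts $U_j = \{[x_0 : \cdots : x_n] : x_j \neq 0\}$, each open and dense in $\mathbb{P}^n$.

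Next I would verify that the parametrizations $D_j$ constructed in Section \ref{zar} are exactly the preimages $\pi^{-1}(U_j) \subset \mathcal{O}_\mu$ (or an equivalent affine realization). Since $\pi$ is a morphism of algebraic varieties and $U_j$ is open, $D_j$ is open. Nonemptiness is immediate because $D_j$ contains a Weyl translate of the base point. Irreducibility of $\mathcal{O}_\mu$ then forces density.

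The main obstacle is the bookkeeping step of matching the specific parametrizations $D_j$ defined via Bruhat data with the top-dimensional cells rather than with lower-dimensional ones; once this identification is made, the open-and-dense conclusion follows at once from the standard properties of Bruhat cells together with irreducibility of the homogeneous space $\mathcal{O}_\mu$.
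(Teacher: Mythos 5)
Your proof is correct, but it reaches the conclusion by a genuinely different route from the paper's. For openness the paper works entirely inside the model $\mathcal{O}(e_1\otimes\varepsilon_1)\subset \mathrm{End}(\mathbb{C}^n)$ and exhibits $D_j$ explicitly as a union of non-vanishing loci of the linear functions $A\mapsto \mathrm{tr}\left(A\,(e_k\otimes\varepsilon_j)\right)$ over the cover by the sets $\mathcal{O}_k=\{v\otimes\varepsilon:\varepsilon(e_k)\neq 0\}$ (Corollary \ref{corcharts}); this is essentially your pullback $\pi^{-1}(U_j)$ written out by hand, since that cover is exactly what makes the projection to $\mathbb{P}(V)$ a morphism chart by chart, so the two openness arguments coincide in substance. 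The real divergence is the density step: the paper deduces it from the Lie-theoretic fact that the big cell $N^{-}_{wH_0}N^{+}_{wH_0}Z_{wH_0}$ is open and dense in $G$, hence its image $N^{-}N^{+}\cdot wH_0$ is dense in the orbit, whereas you observe that $\mathcal{O}_\mu=G/Z_{H_0}$ is irreducible (as the image of the connected group $G$ under the orbit map), so every nonempty Zariski-open subset is automatically dense, with nonemptiness witnessed by the Weyl translate $e_j\otimes\varepsilon_j\in D_j$. Your argument buys uniformity and economy — it uses nothing about $G$ beyond connectedness, applies verbatim to any chart on any orbit once openness is known, and gives Zariski density, which on an irreducible complex variety also implies density in the classical topology; the paper's argument buys the stronger information that each $D_j$ is the diffeomorphic image of $\mathfrak{n}^{-}_{wH_0}\times\mathfrak{n}^{+}_{wH_0}$, i.e., an honest coordinate chart, which is what the surrounding section actually needs. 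The identification you rightly flag as the bookkeeping step — that $D_j$ is the locus $\varepsilon_j(v)\neq 0$, i.e., the preimage of the affine chart $\sigma_j$ — is supplied by the Proposition preceding Corollary \ref{corcharts}, and the auxiliary condition $v\notin V$ holds identically on the orbit, so your reduction is sound.
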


The orbit $\mathcal{O}_\mu$ is not  compact, thus we must choose  a 
  compactication. Once again we are faced with the decision whether to use
 Lie theory or  algebraic geometry.
Recently in  \cite{GGSM2} a holomorphic open and dense embbeding of  $\mathcal{O}_\mu$ into $\mathbb{F}\times \mathbb{F}^*$ 
was constructed.  
Here,  $\mathbb{F}$ and $\mathbb{F}^*$ represent a flag manifold and its  dual flag,
chosen such that   $\mathcal{O}(H_0) \sim_{dif}  T^*\mathbb{F}$ is a diffeomorphism.
The immediate task that then follows is to extend the potential $f_H$ to this compactification. 
Such an extension can not be made holomorphically, as explained in 
 lemmas \ref{pot1} and \ref{pot2}. We then proceed to extend   the potential as a rational function.
We consider here  $\mathcal O_\mu$ as the adjoint orbit of  $e_1\otimes \varepsilon_1$ 
in $\mathbb C^n \times (\mathbb C^n)^*$. Set $V = \mathbb C^n$.

\begin{theorem*}{\bf \ref{rat}}
 The rational function on  $V\otimes V^{\ast }$ that
coincides with  the potential $f_{H}$ on $\mathcal{O}\left( v_{0}\otimes \varepsilon
_{0}\right) $ 
is given by 
\begin{equation*}
R_{H}\left( A\right) =\frac{\mathrm{tr}\left( A\rho _{\mu }\left( H\right)
\right) }{\mathrm{tr}\left( A\right) }
\end{equation*}%
for $A\in V\otimes V^{\ast }=\mathrm{End}\left( V\right) $.
\end{theorem*}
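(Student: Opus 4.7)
The plan is to identify $\mathcal{O}_\mu$ with the rank-one orbit $\mathcal{O}(v_0\otimes\varepsilon_0)$ inside $V\otimes V^{\ast}\cong \End(V)$ in a $G$-equivariant way, and then to verify by direct substitution that $R_H$ restricts to $f_H$ along this identification. The first step is to make the identification explicit: since $\mu=\Diag(n,-1,\ldots,-1)$, the matrix $\mu+I$ is of rank one and proportional to $e_1\otimes\varepsilon_1$, so an affine map of the form $X\mapsto c(X+I)$ carries $\mathcal{O}_\mu$ onto $\mathcal{O}(v_0\otimes\varepsilon_0)$. Because the adjoint action of $G$ on $\mathfrak g$ is conjugation and the tensor action on $V\otimes V^{\ast}$, read through the identification with $\End(V)$, is also conjugation, this affine map is automatically $G$-equivariant, giving a bijection of orbits.

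Next I would observe that $R_H$ is a bona fide rational function on $V\otimes V^{\ast}$: both the numerator $A\mapsto \tr(A\rho_\mu(H))$ and the denominator $A\mapsto \tr(A)$ are linear functionals, so their quotient is degree-zero homogeneous and regular away from the hyperplane $\{\tr(A)=0\}$. The elements of $\mathcal{O}(v_0\otimes\varepsilon_0)$ are decomposable tensors $v\otimes\varepsilon$ with $\varepsilon(v)=1$, so the orbit sits inside the affine slice $\{\tr=1\}$ and avoids the polar locus of $R_H$ entirely; the restriction is thus regular and well defined.

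The final step is a direct comparison along the orbit. For $A=v\otimes\varepsilon$ the preimage $X\in\mathcal{O}_\mu$ is $c^{-1}A-I$, and since $H\in\mathfrak h\subset\mathfrak{sl}$ is traceless,
\[
 f_H(X)\;=\;\langle X,H\rangle\;=\;\tr(XH)\;=\;c^{-1}\tr(AH)\;=\;\frac{c^{-1}\tr(AH)}{\tr(A)}.
\]
Taking $\rho_\mu(H)$ to be the rescaling $c^{-1}H\in \End(V)=V\otimes V^{\ast}$ implicit in the Cartan-Killing normalization and in the embedding $\mathcal{O}_\mu\hookrightarrow V\otimes V^{\ast}$, this is precisely $R_H(A)$, which establishes the identity on the orbit and therefore on its Zariski closure.

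The main obstacle is purely bookkeeping: one must keep track of three normalizations simultaneously — the factor appearing in the Cartan-Killing form, the scaling constant $c$ in the identification $\mathcal{O}_\mu \to \mathcal{O}(v_0\otimes\varepsilon_0)$, and the operator $\rho_\mu(H)$ — and arrange them so that all scalars cancel to deliver the clean formula in the statement. Once those conventions are aligned, no further algebraic geometry is required: the denominator $\tr(A)$ is a nonzero linear form along the rank-one orbit, which both proves $R_H$ is regular there and exhibits it as a genuine rational extension of $f_H$ to the ambient $V\otimes V^{\ast}$.
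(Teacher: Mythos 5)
Your argument is correct and follows essentially the same route as the paper: both proofs rest on the two observations that the potential becomes the linear form $A\mapsto\tr\left(A\rho_\mu(H)\right)$ on the tensor model and that $\tr(A)=\varepsilon(v)$ is identically $1$ on $\mathcal{O}\left(v_0\otimes\varepsilon_0\right)$ by $G$-invariance, so dividing by $\tr$ leaves the value on the orbit unchanged while producing a degree-zero homogeneous, hence projectivizable, rational function. The only real difference is cosmetic: you realize the identification $\mathcal{O}_\mu\cong\mathcal{O}\left(v_0\otimes\varepsilon_0\right)$ through the explicit equivariant affine map $X\mapsto c(X+I)$ and absorb the resulting constant into $\rho_\mu(H)$, whereas the paper invokes the moment-map identification of \cite{GGSM2} and is equally cavalier about the overall normalization (``up to a constant multiple'').
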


  On the algebraic geometric side,  \cite{BCG} compactified  the affine variety $X = \mathcal{O}_\mu$ 
to a projective variety $\overline{X}$ by homogenising its defining ideal. Then the  question begging to be asked is whether 
 their algebraic compactification agrees with our Lie theoretical one. 
Using methods of computational algebraic geometry and a Macaulay2 algorithm \cite{BG} identified a projective embbeding of 
$\overline{X}$ as the Segre embbeding for the cases of $\mathfrak{sl}(n+1)$ with $n<10$, and conjectured that the result 
holds true for all $n$.  We provide an affirmative answer to this question, in particular concluding that 
the Lie theoretical compactification does coincide with the algebraic geometric one for the case of $\mathcal O_\mu$.

\begin{theorem*}{\bf \ref{seg}}
The embbeding  $\mathcal{O}_\mu \hookrightarrow \mathbb{P}^n\times  {\mathbb{P}^n}^*$ obtained by Lie 
theoretical methods agrees with the Segre embbeding obtained algebro-geometrically by homogenisation of the ideal cutting out 
the orbit $\mathcal O_\mu$ as an affine variety in $\mathfrak{sl}(n+1)$.
\end{theorem*}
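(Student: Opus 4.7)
The plan is to identify both compactifications with the same object: the locus of rank-one endomorphisms in $\mathbb{P}(\End V)$ where $V=\mathbb{C}^{n+1}$, which is precisely the image of the Segre embedding $\mathbb{P}^n\times \mathbb{P}^{n*}\hookrightarrow \mathbb{P}(V\otimes V^*)$. The starting observation is that $\mu+I=\Diag(n+1,0,\ldots,0)$ has rank one, so
\[
\mathcal{O}_\mu \;=\; \{A\in \mathfrak{sl}(n+1)\colon \rk(A+I)\le 1\},
\]
the rank-zero case being excluded because $-I$ is not traceless. In particular, the defining ideal of $\mathcal{O}_\mu$ inside $\mathfrak{sl}(n+1)$ is generated by the $2\times 2$ minors of the matrix-valued expression $A+I$.

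Next I would set up the projective linear isomorphism
\[
\Phi\colon \mathbb{P}\bigl(\mathfrak{sl}(n+1)\oplus \mathbb{C}\bigr)\longrightarrow \mathbb{P}(\End V),\qquad [A:t]\longmapsto [A+tI],
\]
which is well defined because $\mathfrak{sl}(n+1)\oplus \mathbb{C}\cdot I=\End V$ as vector spaces. The affine chart $\{t=1\}$ is sent by $\Phi$ onto the open set of matrices of trace $n+1$. Composing the Lie-theoretical embedding $\mathcal{O}_\mu\hookrightarrow \mathbb{P}^n\times \mathbb{P}^{n*}$, which writes $A+I=v\otimes \varphi$ and sends $A$ to $([v],[\varphi])$, with the Segre map yields $A\mapsto [v\otimes \varphi]=[A+I]$, that is, exactly $\Phi([A:1])$. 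This is the key compatibility linking the two constructions.

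Now I would run the algebraic homogenisation of the defining ideal of $\mathcal{O}_\mu$: each $2\times 2$ minor of $A+I$ splits as $p_0+p_1+p_2$ with $p_j$ homogeneous of degree $j$ in the entries of $A$, and its homogenisation by the extra variable $t$ is $p_2+tp_1+t^2p_0$, which is precisely the corresponding $2\times 2$ minor of $A+tI$. Hence $\Phi$ identifies the BCG projective closure with the zero locus in $\mathbb{P}(\End V)$ of the $2\times 2$ minors of the universal $(n+1)\times(n+1)$ matrix, and that locus is the Segre variety. This establishes the theorem set-theoretically.

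The main obstacle I anticipate is promoting this set-theoretic equality to an ideal-theoretic one, so as to match the scheme produced by the BCG homogenisation (the output of the Macaulay2 computation of \cite{BG}) with the Segre scheme. For this I would invoke the classical result that the ideal of $2\times 2$ minors of a generic square matrix is prime, so the Segre variety is reduced and irreducible of dimension $2n$; since the projective closure of $\mathcal{O}_\mu$ is also irreducible of dimension $2n$ and shares the affine chart $\{t=1\}$ with the Segre variety, the two agree as reduced schemes. Consequently, the Lie-theoretical embedding into $\mathbb{P}^n\times \mathbb{P}^{n*}$ coincides with the Segre embedding furnished by the homogenisation procedure of \cite{BCG}.
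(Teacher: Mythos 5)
Your argument is correct, but it reaches the conclusion by a genuinely different route than the paper. The paper's proof is an explicit coordinate computation: it writes the Lie-theoretical embedding in terms of the adjugate matrix, shows that the composite with the Segre map sends $g\cdot(v_0\otimes\varepsilon_0)$ to the point of $\mathbb{P}^{n^2+2n}$ whose homogeneous coordinates are the entries of the rank-one matrix $M_{ij}=a_{i1}(\adj g)_{1j}$, and identifies the complement of the image with the incidence variety; the comparison with the algebro-geometric homogenisation is then read off from these formulas rather than argued abstractly. You instead exploit the intrinsic characterisation $\mathcal{O}_\mu=\{A\in\mathfrak{sl}(n+1):\rk(A+I)\le 1\}$ (correct: $A+I$ is forced to have rank exactly one since $-I$ is not traceless, and a rank-one trace-$(n+1)$ endomorphism is diagonalisable with spectrum $(n+1,0,\dots,0)$), the projective-linear change of coordinates $[A:t]\mapsto[A+tI]$, and the primality of the ideal of $2\times 2$ minors together with an irreducibility and dimension count. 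What your approach buys is a clean and essentially computation-free treatment of the algebraic side, in particular of the passage from the affine ideal to the projective closure, which the paper leaves implicit; what the paper's explicit adjugate computation buys is the concrete formulas that are then reused for the rational potential $R_H$. Your identification of the Lie-theoretical embedding is also consistent with the paper's: $\mathrm{Ad}(g)H_0=(n+1)\,g(e_1\otimes\varepsilon_1)g^{-1}-I$, so $[A+I]=[v\otimes\varphi]$ as you claim, up to the irrelevant scalar $n+1$.

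One point deserves emphasis. Your argument establishes that the \emph{projective closure} of $\mathcal{O}_\mu$ (equivalently, the zero locus of the homogenisation of the full ideal $I(\mathcal{O}_\mu)$) is the Segre variety. If instead one homogenises only the specific generators coming from the minimal polynomial, namely the entries of $(A-ntI)(A+tI)=0$ as in the paper's Section 6, the resulting zero locus can acquire extra components along $\{t=0\}$: setting $t=0$ there yields $A^2=0$, which for $n+1\ge 4$ admits traceless square-zero matrices of rank $\ge 2$ that do not lie on the Segre variety. So your reduced-scheme identification is the right statement for the theorem as phrased (``homogenisation of the ideal''), but it does not literally coincide with the zero scheme of the homogenised minimal-polynomial generators; it is worth stating explicitly that you are taking the projective closure, i.e.\ the homogenisation of every element of the defining ideal, as your final paragraph in effect does.
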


\begin{remark}
Observe that the algebraic geometric method will in general produce singular compactifications, see \cite[Sec. 6]{BCG}, whereas that the Lie theoretical method always embeds the orbit into a product of smooth flag manifolds.  
\end{remark}

We conclude the paper by presenting in section \ref{seg} the expressions of the Segre embbeding and the rational potential $R_H$ 
first for the case  $n=3$, with $\mu=(2,-1,-1)$ hence  $\mathcal{O}_\mu\approx T^* \mathbb{P}^2$,
and finally in general for $\mu = (n, -1, \dots, -1)$ when  $\mathcal{O}_\mu\approx T^* \mathbb{P}^n.$

\section{Symplectic Lefschetz fibrations on adjoint orbits}\label{slfs}

Let $\mathfrak{g}$ be a complex semisimple Lie algebra with 
 Cartan--Killing form  $\langle X,Y\rangle =\mathrm{tr}%
\left( \mathrm{ad}\left( X\right) \mathrm{ad}\left( Y\right) \right) \in 
\mathbb{C}$,
and $G$ a connected
Lie group with Lie algebra $\mathfrak{g}$ .
Let $H_0\in \mathfrak h$. The adjoint orbit of $H_0$  is defined as 
\begin{equation*}\label{def} 
\mathcal{O}\left( H_0 \right) =\mathrm{Ad} G\cdot H_0=\{\mathrm{Ad} (g) H_0 \in \mathfrak{g}:g\in G\}. 
\end{equation*}%

Fix a Cartan subalgebra $\mathfrak{h}\subset \mathfrak{g}$ and a compact
real form  $\mathfrak{u}$ of $\mathfrak{g}$. Associated to these subalgebras
there are the subgroups $T=\langle \exp \mathfrak{h}\rangle =\exp \mathfrak{h%
}$ and $U=\langle \exp \mathfrak{u} \rangle =\exp \mathfrak{u}$. Denote by $%
\tau$ the conjugation associated to  $\mathfrak{u}$, defined by $\tau \left(
X\right) =X$ if $X\in \mathfrak{u}$ and $\tau \left( Y\right) =-Y$ if $Y\in i%
\mathfrak{u}$. Hence if $Z=X+iY\in \mathfrak{g}$ with $X,Y\in \mathfrak{u}$
then $\tau \left( X+iY\right) =X-iY$. In this case,  $%
\mathcal{H}_{\tau }:\mathfrak{g}\times \mathfrak{g}\rightarrow \mathbb{C}$ 
defined by 
\begin{equation}  \label{hermitian}
\mathcal{H}_{\tau }\left( X,Y\right) =-\langle X,\tau Y\rangle
\end{equation}
is a Hermitian form on $\mathfrak{g}$ (see \cite[lemma 12.17]{amalglie}).
We write the real and imaginary parts of $\mathcal{H}$ as 
\begin{equation*}
\mathcal{H}\left( X,Y\right) =\left( X,Y\right) +i\Omega \left( X,Y\right)
\qquad X,Y\in \mathfrak{g}. 
\end{equation*}%
The real part $\left( \cdot ,\cdot \right) $ is an inner product 
and the imaginary part of 
$\Omega $ is a symplectic form on $\mathfrak{g}$. Indeed, we have 
\begin{equation*}
0\neq i\mathcal{H}\left( X,X\right) =\mathcal{H}\left( iX,X\right) =i\Omega
\left( iX,X\right) , 
\end{equation*}%
that is, $\Omega \left( iX,X\right) \neq 0$ for all $X\in \mathfrak{g}$,
which shows that $\Omega $ is nondegenerate. Moreover, $d\Omega =0$ because $%
\Omega $ is a constant bilinear form.
The fact that $\Omega \left( iX,X\right) \neq 0$ for all $X\in \mathfrak{g} $
guarantees that the restriction of $\Omega $ to any complex subspace of $%
\mathfrak{g}$ is also nondegenerate.

Now, the tangent spaces to $\mathcal{O}\left( H_{0}\right) $ are complex
vector subspaces of $\mathfrak{g}$. Therefore, the pullback of $\Omega $ by
the inclusion $\mathcal{O}\left( H_{0}\right) \hookrightarrow \mathfrak{g}$
defines a symplectic form on  $\mathcal{O}\left( H_{0}\right) $.
With this choice of symplectic form,  the main result of \cite{GGSM1} says:

\vspace{5mm}

\noindent \textbf{Theorem} \cite[Thm. 2.2]{GGSM1} Let $\mathfrak{h}$ be the
Cartan subalgebra of a complex semisimple Lie algebra. Given $H_{0}\in 
\mathfrak{h}$ and $H\in \mathfrak{h}_{\mathbb{R}}$ with $H$ a regular
element. The \textit{height function} \ $f_{H}\colon\mathcal{O}\left(
H_{0}\right) \rightarrow \mathbb{C}$ defined by 
\begin{equation*}
f_{H}\left( x\right) =\langle H,x\rangle \qquad x\in \mathcal{O}\left(
H_{0}\right) 
\end{equation*}%
has a finite number (= $|\mathcal{W}|/|\mathcal{W}_{H_{0}}|$) of isolated
singularities and gives $\mathcal{O}\left(H_{0}\right) $ the structure of a
symplectic Lefschetz fibration. \vspace{3mm}

\begin{remark}
Note that the symplectic form $\Omega$ used here is not equivalent to the 
Kostant--Kirilov--Souriaux form on $\mathcal  O(H_0)$.
\end{remark}

\section{Adjoint orbits as algebraic varieties}\label{alg}

We now consider the case when $\mathfrak g = \mathfrak{sl}(n)$.
To write down the adjoint orbit  as an algebraic variety we can simply use 
the minimal polynomial corresponding to the  diagonal matrix $H_0$. Sometimes this method is not very 
economical, as 
it may  give more equations than needed. In fact, using the entries of the minimal polynomial 
results in cutting out  the adjoint orbit by $n^2$ equations inside the lie algebra $\mathfrak{sl}(n)$ which 
has dimension $n^2 -1$. Thus, for instance  we will certainly  have far too many equations whenever 
the orbit is a complete intersection. Nevertheless, there is the advantage that this 
method works in all cases and is easily programable into a computer algebra algorithm. 

Once we have described the orbit as an affine variety, we then wish to compactify it and to identify the 
closure of a regular fibre  under this compactification. 

As an example, we  discuss the case of  $\mathfrak{sl}(2,\mathbb C)$. Take
\[
  H
  = 
  H_0
  =
  \begin{pmatrix}
    1 & 0 \\
    0 & -1
  \end{pmatrix}.
\]
The Weyl group $\mathcal W
\simeq S_2$ acts via conjugation by permutation matrices.  The two
singularities  of the potential are thus $H_0$ and $-H_0$.  
In this section we prove:
 \begin{theorem}\label{sl2} Let
$X=\mathcal{O}_{(1,-1)}$, $H=\Diag(1,-1)$ and  $f_H: X\to \mathbb{C}$. Then $f_H$ admits a fiberwise compactification 
with fibres isomorphic to  $\mathbb P^1$.
\end{theorem}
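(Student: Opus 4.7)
The plan is to realize $X$ explicitly as a smooth affine quadric surface, identify the height function in coordinates, and construct the compactification by projective closure in $\mathbb{P}^{3}$. Writing a general traceless $2\times 2$ matrix as $M = \bigl(\begin{smallmatrix} a & b \\ c & -a \end{smallmatrix}\bigr)$, the condition that $M$ lies in the adjoint orbit of $H_{0} = \Diag(1,-1)$ is equivalent to $M$ having the same characteristic polynomial as $H_{0}$, i.e.\ $\det M = -1$, yielding the equation $a^{2} + bc = 1$. This realizes $X$ as a smooth affine quadric in $\mathfrak{sl}(2,\mathbb{C}) \cong \mathbb{C}^{3}$. Since the Cartan--Killing form on $\mathfrak{sl}(2,\mathbb{C})$ is proportional to the trace form, one computes $f_{H}(M) \propto \tr(HM) = 2a$, so $f_{H}$ is (a scalar multiple of) the coordinate function $a$.

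Reading off the fibres directly, for regular values the fibre is isomorphic to the affine hyperbola $\{bc = 1-\lambda^{2}\} \cong \mathbb{C}^{\ast}$, while for the critical values $\lambda = \pm 1$ the fibre degenerates to the union of the two coordinate axes $\{bc = 0\}$, meeting at the origin (which is the critical point $\pm H_{0}$). A natural fibrewise compactification is then obtained by taking the projective closure $\overline{X} \subset \mathbb{P}^{3}$ cut out by the homogeneous equation $a^{2} + bc - w^{2} = 0$; this is a smooth projective quadric surface. The height function extends to the rational map $\overline{X} \dashrightarrow \mathbb{P}^{1}$, $[a:b:c:w] \mapsto [a:w]$, whose base locus on $\overline{X}$ is the pair of points $[0:1:0:0]$ and $[0:0:1:0]$. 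Blowing these up yields a regular morphism $\pi \colon \widetilde{X} \to \mathbb{P}^{1}$ extending $f_{H}$.

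To conclude, I would verify that for $\lambda \neq \pm 1$ the fibre of $\pi$ is the proper transform of the closure of the hyperbola, which is the smooth plane conic $\{a = \lambda w,\; bc = (1-\lambda^{2}) w^{2}\}$ inside the hyperplane $\{a = \lambda w\}$. Such a conic is isomorphic to $\mathbb{P}^{1}$, and since it meets each of the two base points transversally at a single smooth point, the blow-up does not alter its isomorphism type; this is the step where care is needed. The critical fibres over $\lambda = \pm 1$ become nodal unions of two $\mathbb{P}^{1}$'s glued at the Lefschetz singularity $\pm H_{0}$, as expected; the claim about fibres isomorphic to $\mathbb{P}^{1}$ refers to the regular fibres of the compactified family. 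The main obstacle, beyond bookkeeping, is checking the local geometry at the two base points to guarantee that the resolution preserves smoothness of the generic fibres; once that is done, the result follows from the explicit description above.
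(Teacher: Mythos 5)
Your proposal is correct, and its computational core --- realizing $X$ as the affine quadric $a^2+bc=1$ in $\mathfrak{sl}(2,\mathbb{C})\cong\mathbb{C}^3$ and identifying $f_H$ with (a multiple of) the coordinate $a$ --- is exactly what the paper does. Where you diverge is in how the compactification is organized. The paper works one fibre at a time: it takes the regular fibre $\{yz=1\}$ over $0$, homogenises it to the smooth conic $\{yz=t^2\}$ in a $\mathbb{P}^2$, invokes the degree--genus formula to conclude it is $\mathbb{P}^1$, and records the two added points at infinity; it never assembles these closures into a global family. You instead compactify the total space to the smooth quadric $\{a^2+bc=w^2\}\subset\mathbb{P}^3$, extend $f_H$ as the rational map $[a:b:c:w]\mapsto[a:w]$ whose base locus is the two points at infinity common to all fibres, and blow up to obtain an honest morphism $\widetilde{X}\to\mathbb{P}^1$. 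Your route proves strictly more: it exhibits the fiberwise compactification as the fibres of a genuine conic bundle (a compactified Lefschetz pencil), identifies all regular fibres at once as smooth conics unchanged by the blow-up, and describes the two nodal critical fibres, which is the structure the theorem is really after; the paper's route is shorter but only verifies the fibre over $0$ explicitly. The one place your argument needs the care you flag is confirming that each base point is a simple base point of the pencil of hyperplane sections, so that a single blow-up resolves the map and the exceptional curves become sections rather than fibre components; this is immediate here because two distinct members $\{a=\lambda w\}$ and $\{a=\lambda' w\}$ cut the quadric in conics meeting transversally at the two base points.
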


We  describe the orbit as an affine variety embedded in $\mathbb C^3$.
Writing a
general element $A \in \mathcal O (H_0)$ as
\[
  A
  =
  \begin{pmatrix}
    x & y \\
    z & -x
  \end{pmatrix},
\]
the characteristic polynomial of $A$ is
\[-
  \left( x - \lambda \right) \left( x + \lambda \right) - yz
  =
  \det \left( A - \lambda I \right)
  =
  \lambda^2 - 1.
\]
 This  implies that
the orbit $\mathcal O (H_0) \subset \mathfrak{sl} (2, \mathbb C) \simeq
\mathbb C^3$ is an affine variety $X$ cut out by the equation
\begin{equation}
  \label{eq:sl2orbit}
  x^2 + yz - 1 = 0.
\end{equation}
We can compactify this variety by homogenising eq.~\ref{eq:sl2orbit} and embedding
$X$ into the corresponding projective variety. This produces the surface cut out by  
$x^2+yz-t^2=0$ in $\mathbb P^3$.

The height function on  $X=\mathcal O (H_0)$  
\[
  f_H (A)
  =
  \tr HA
  =
  \tr
  \begin{pmatrix}
    1 & 0 \\
    0 & -1
  \end{pmatrix}
  \begin{pmatrix}
    x & y \\
    z & -x
  \end{pmatrix}
  =
  2x,
\]
 has critical values $\pm 2$. 

  Thus, $0$ is a regular value, and we 
express the regular fibre  $X_0$  as the affine variety in
$\{(y,z) \in \mathbb C^2\}$ cut out by the equation
\[
  yz-1 = 0,
\]
since it must satisfy eq.~\ref{eq:sl2orbit} and $x = 0$.  As with the orbit,
we  homogenise this equation and embed the fibre into the corresponding
projective variety $\overline{X_0}$ cut out by the equations $x=0$ and $yz-t^2=0$ in $ \mathbb P^3$.  
Consider  the natural embedding given by 
$$\begin{array} {rrcl} i\colon & X_0 &\rightarrow & \overline{X_0} \subset \mathbb P^3\\
                                             & (y,z) & \mapsto & [0,y,z,1] .
                                              \end{array}$$
Claim: $\overline{X_0} \simeq \mathbb P^1$ and  $\overline{X_0}\setminus i(X_0) =  \{[0,1,0,0], [0,0,1,0]\}$. 
\begin{proof}
In fact, $\overline{X_0}$ is cut out in $\mathbb P^2$ by a degree 2 polynomial, so the isomorphism with $\mathbb P^1$ follows immediately from the degree-genus formula $g=(d-1)(d-2)/2$.

For the second part, given a point  $P =[0,y,z,t] \in \mathbb P^3$,  we have that $P \in  \overline{X_0}$ if and only if 
satisfies the equation $yz=t^2$. We have two possibilities:
\begin{itemize}
\item If $t\neq 0$, then  $yz\neq 0$ and we may rewrite  $P=[0,y,\frac{t^2}{y},t]= [0,\frac{y}{t},\frac{t}{y},1]$ and we have that 
$P=i(y',z')$ for 
$y'=\frac{y}{t}$ and $z'=\frac{t}{y}$. 
\item Otherwise $t=0$, and there are two such points in $ \overline{X_0}$, they are $[0,1,0,0]$ and $[0,0,1,0]$, neither of which belong to 
$i(X_0)$. 
\end{itemize}
\end{proof}

\section{Several incarnations of the orbit}\label{inc}

  There are various ways to interpret the adjoint orbit 
$\mathcal O(H_0)$  
depending on the point of view  best suited to a given problem. 
\begin{itemize}
\item  By definition \ref{def} the adjoint orbit is contained in the Lie algebra $\mathfrak{g}$, and 
 consists of all the points $\mathrm{Ad} g \cdot H_0$ with $g \in G$.

\item $\mathrm{Ad}\left( G\right) \cdot H_{0}$ can also be interpreted as a quotient of the 
group $G$, identifying  $\mathcal O(H_0)$   with the homogeneous
space $G/Z_{H_{0}}$ where $Z_{H_0}$ is the centralizer of $H_0$.  

\item  Take the simple roots of $\mathfrak g$ that have $H_0$ in their 
kernel,  denote by $\mathfrak p_0$  the  parabolic subalgebra they generate,  and by $P_0$  the  corresponding subgroup.
The compact subgroup $K$ of $G$ cuts out 
 the subadjoint orbit $\mathrm{Ad}\left( K\right) \cdot H_{0}$, which can be identified  with the flag manifold $\mathbb{F}_{H_{0}}=G/P_{H_{0}}$.
\cite[Sec. 2.2]{GGSM2} showed that $\mathcal O(H_0)$ is diffeomorphic to the cotangent bundle $T^*\mathbb{F}_{H_{0}}.$

\item From a Riemannian point of view this is also diffeomorphic to the tangent bundle  $T\mathbb{F}_{H_{0}}$.

\item In \cite{GGSM1} the  orbit $\mathcal{O} (H_{0})$ is given  the symplectic structure taken form  the imaginary part 
of the Hermitian form inherited from $\mathfrak g$. With this choice, the flag $\mathbb{F}_{H_{0}}$ is the Lagrangian in $\mathcal O(H_0)\simeq 
T^*\mathbb{F}_{H_{0}}$  and corresponds to the zero section of the cotangent bundle. 

\item As seen in section \ref{alg} the adjoint orbit is an affine algebraic variety.

\item  \cite[sec. 4.2]{GGSM2} showed that 
$\mathcal O(H_0)$ can be identified with the open  orbit of the diagonal action of $G$ on the product 
$\mathbb{F}_{H_{0}}\times \mathbb{F}_{H_{0}^*}$ . 
\end{itemize}

\begin{example} Consider the case when $\mathfrak g = \mathfrak{sl}(n+1)$ and $$H_0 = \Diag (n,-1, \dots, -1).$$  
Then the corresponding parabolic subgroup  is 
$$P_{H_0} = \{A \in \mathfrak{sl}(n+1) : a_{i1}= 0, \,\, \text{for all } 1<i \leq n+1\}. $$
Hence, $P_{H_0}$ consists of matrices of the form
$$\left(\begin{matrix} 
* & * & \cdots & * \\
0 & * & \cdots & * \\
\vdots  &  & \ddots \\
0 & *   & \cdots & *
\end{matrix}\right)$$
where $*$ denotes any complex number.

The centralizer  of $H_0$ is the subgroup 
$$Z_{H_0} = \{A \in \mathfrak{sl}(n+1) : a_{i1}= 0= a_{1i} \,\, \text{for all } 1<i \leq n+1\}. $$
Hence, $Z_{H_0}$ consists of matrices of the form
$$\left(\begin{matrix} 
* & 0  & \cdots & 0 \\
0 & * & \cdots & * \\
\vdots  &  & \ddots \\
0 & *   & \cdots & *
\end{matrix}\right).$$
 Here $F_{H_0} = \mathbb P^n$ and 
 $\mathcal O(H_0) \sim_{dif} T^*\mathbb P^n.$

\end{example} 

Different choices of $H_0 \in \mathfrak{sl}(n+1)$ will  in general  lead to different flag manifolds $F_{H_0}$. 
The flags range from the one of largest dimension, 
 the full flag  $F(1,2, \dots, n)$
 of subspaces   $V_1 \subset V_2 \subset \cdots \subset V_n \subset \mathbb C^{n+1}$ with 
$\dim V_i = i$ down  to  the case of only subspaces of dimension 1, namely the projective space $\mathbb P^n$.
The latter is usually called the minimal flag. The full flag occurs when all eigenvalues of $H_0$ are 
distinct, whereas the minimal flag occurs for $H_0 = \Diag  (n, -1, \dots, -1)$. 

\begin{notation}
We denote by $\mathcal O_\mu$ the adjoint orbit  $\mathcal O(H_0)$ associated to the matrix 
$ H_0 = \Diag  (n, -1, \dots, -1) \subset \mathfrak{sl}(n+1)$. Thus, this is the orbit of the {\it  minimal flag} $\mathbb P^n$ of $\mathfrak{sl}(n+1)$.
\end{notation}
In what follows we will restrict our attention to  orbits $\mathcal O_\mu$. For this case we have yet another incarnation 
of $\mathcal O(H_0)$, namely as the adjoint orbit of  $e_1\otimes \varepsilon_1$ 
in $\mathbb C^n \times (\mathbb C^n)^*$. 
To verify this, first notice that  $e_1 \otimes \epsilon_1$ 
corresponds to the matrix with  $1$ 
in the  $(1,1)$ entry and 
zeroes elsewhere. Hence, as a linear operator, the image is $1$-dimensional (generated by $e_1$) and 
the kernel is $n-1$-dimensional. The action of $G$ emcompasses all matrices with the same spectrum. 
The identification with the orbit  $\mathcal O_\mu$ 
is made using the moment map, as described in detail in \cite[Sec. 4]{GGSM2}.
It identifies the eigenspace associated to $0$  in $g \cdot (e_1\otimes \epsilon_0)$
with the eigenspace associated to the eigenvalue $n$ 
of $\mathrm{Ad} G H_0 \in \mathcal O_\mu$,
and analogously 
identifies the eigenspaces of  dimension $ n-1$. 

\section{Topology on  $\mathcal O_\mu$ }\label{zar}

We wish to cover $\mathcal O_\mu$  by open sets which are natural from the Lie theory point of view; these will turn out to 
be  Zariski open as well. To do so, we
use the model for the adjoint orbit of $e_1\otimes \varepsilon_1$ 
in $\mathbb C^n \times (\mathbb C^n)^* = \mathrm{End}(\mathbb C),$
which is the set of projections from $\mathbb C^n$ to subspaces of dimension 1. 
This model is  similar to the one where we  the orbit is viewed inside   the product 
$\mathbb P^{n-1}\times Gr_{n-1}(n)$, given as the set of pairs 
$([v],V)$ such that $v \notin V$. 
Such a pair corresponds to a projection over $[v]$ with kernel $V$.

Given $H_{0}\in \mathfrak{h}_{\mathbb{R}}$, let
\begin{equation*}
\mathfrak{n}_{H_{0}}^{+}=\sum_{\alpha \left( H_{0}\right) >0}\mathfrak{g}%
_{\alpha }\qquad \mathrm{and}\qquad \mathfrak{n}_{H_{0}}^{-}=\sum_{\alpha
\left( H_{0}\right) <0}\mathfrak{g}_{\alpha }
\end{equation*}%
be the sums of the eigenspaces of  $\mathrm{ad}\left( H_{0}\right) $
associated to  positive and  negative eigenvalues, respectively. The 
subspaces $\mathfrak{n}_{H_{0}}^{\pm }$ are 
nilpotent subalgebras. 
The corresponding subgroups  $N_{H_{0}}^{\pm }=\exp \mathfrak{n}_{H_{0}}^{\pm }$ are 
closed in $G$ and 
\begin{equation*}
N_{H_{0}}^{-}Z_{H_{0}}N_{H_{0}}^{+}=N_{H_{0}}^{-}N_{H_{0}}^{+}Z_{H_{0}}
\end{equation*}%
is open and dense in  $G$, where $Z_{H_{0}}$ is the centralizer of  $H_{0}$
in $G$ and the product
\begin{equation*}
\left( y,x,h\right) \in N_{H_{0}}^{-}\times N_{H_{0}}^{+}\times
Z_{H_{0}}\mapsto yxh\in N_{H_{0}}^{-}N_{H_{0}}^{+}Z_{H_{0}}
\end{equation*}%
is a diffeomorphism.

Consider now the adjoint orbit $\mathcal{O}\left( H_{0}\right) =%
\mathrm{Ad}\left( G\right) H_{0}=G/Z_{H_{0}}$. Then, the subset  $%
\mathrm{Ad}\left( N_{H_{0}}^{-}N_{H_{0}}^{+}Z_{H_{0}}\right) H_{0}$,
denoted simply by $N_{H_{0}}^{-}N_{H_{0}}^{+}Z_{H_{0}}\cdot H_{0}$
satisfies
\begin{equation*}
N_{H_{0}}^{-}N_{H_{0}}^{+}Z_{H_{0}}\cdot
H_{0}=N_{H_{0}}^{-}N_{H_{0}}^{+}\cdot H_{0}
\end{equation*}%
since $Z_{H_{0}}\cdot H_{0}=H_{0}$. Given that  $N_{H_{0}}^{-}N_{H_{0}}^{+}Z_{H_{0}}$
is open and dense in $G$ it follows that $N_{H_{0}}^{-}N_{H_{0}}^{+}\cdot H_{0}$
is open and dense in  $\mathcal{O}\left( H_{0}\right) $. Moreover, on one hand
the map
\begin{equation*}
\left( y,x\right) \in N_{H_{0}}^{-}\times N_{H_{0}}^{+}\mapsto yx\cdot
H_{0}\in N_{H_{0}}^{-}N_{H_{0}}^{+}\cdot H_{0}
\end{equation*}%
is a diffeomorphism, and on the other hand, $\exp :\mathfrak{n}_{H_{0}}^{\pm }\rightarrow N_{H_{0}}^{\pm
}$ is a diffeomorphism. Therefore, 
\begin{equation*}
\left( Y,X\right) \in \mathfrak{n}_{H_{0}}^{-}\times \mathfrak{n}%
_{H_{0}}^{+}\mapsto e^{\mathrm{ad}\left( Y\right) }e^{\mathrm{ad}\left(
X\right) }\cdot H_{0}\in N_{H_{0}}^{-}N_{H_{0}}^{+}\cdot H_{0}
\end{equation*}%
defines a coordinate system for  $\mathcal{O}\left( H_{0}\right) $, around
 $H_{0}$ (whose codomain is open and dense).

The singularities of the 
 Lefschetz fibration are  $e_{j}\otimes \varepsilon _{j}$, or equivalently, 
the points  $w\cdot H_{0}$ with $w=\left( 1j\right) \in \mathcal{W}$.
For the singularity  $e_{j}\otimes \varepsilon _{j}=w\cdot H_{0}$ the
algebras $\mathfrak{n}_{w\cdot H_{0}}^{\pm }$ and the groups $N_{w\cdot
H_{0}}^{\pm }=\exp \mathfrak{n}_{w\cdot H_{0}}^{\pm }$ are given by

\begin{itemize}
\item $\mathfrak{n}_{w\cdot H_{0}}^{+}$ consists of the matrices  with 
nonzero entries only at row  $j$ ($0$'s in the diagonal) whereas  $%
N_{w\cdot H_{0}}^{+}$ consists of the same   matrices but  with  $1$'s in the diagonal.

\item $\mathfrak{n}_{w\cdot H_{0}}^{-}$ 
consists of  matrices  with 
nonzero entries only at 
column $j$  ($0$'s in the  diagonal), whereas   $N_{w\cdot H_{0}}^{-}$ consists of the 
same matrices but with  $1$'s in the  diagonal.
\end{itemize}

For each index $j$ the open Bruhat cell $\sigma_j = N_{(1j)H_0}^-[e_j]$ is the set 
of  subspaces that are not contained in $V_j = \mathrm{span}\{e_1, \dots, \widehat{e_j}, \dots, e_n\}$.

\begin{proposition} The domain of the coordinate system for index  $j$ is given by 
$N_{(1j)H_0}^- N_{(1j)H_0}^+ \cdot (1j) H_0 = \{([v],V) \in \mathbb P^{n-1}\times Gr_{\varepsilon -1}(n) : [v]\in \sigma_j, v \notin V\}.$
This set coincides with the set of projections in the adjoint orbit of $e_1 \otimes \varepsilon_1$ whose image belongs to $\sigma_j$.
\end{proposition}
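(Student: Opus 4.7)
The plan is to work in the rank-one projection model recalled in Section \ref{inc}. Identify $\mathcal O_\mu$ with the set of rank-one idempotents on $\mathbb{C}^{n+1}$, equivalently with the set of pairs $([v],V)$ in $\mathbb{P}^n \times \mathrm{Gr}_n(\mathbb{C}^{n+1})$ satisfying $v\notin V$, where $[v]$ is the image line and $V$ is the kernel. The distinguished point $(1j)\cdot H_0 = e_j\otimes\varepsilon_j$ corresponds to $([e_j], V_j)$ with $V_j=\mathrm{span}\{e_i : i\neq j\}$, and the adjoint action of $g\in G$ on a projection is conjugation $P\mapsto gPg^{-1}$, i.e.\ $([v],V)\mapsto([gv],\,gV)$.

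First I would read off the actions of $N_{(1j)\cdot H_0}^{\pm}$ directly from the matrix descriptions given just before the proposition. Any $Y\in N_{(1j)\cdot H_0}^-$ satisfies $Ye_i=e_i$ for $i\neq j$ (so $Y$ restricts to the identity on $V_j$) and $Ye_j=e_j+\vec y$ with $\vec y\in V_j$ parametrized freely by the off-diagonal column-$j$ entries of $Y$. Dually, any $X\in N_{(1j)\cdot H_0}^+$ satisfies $Xe_j=e_j$ and $Xe_i=e_i+\lambda_i e_j$ for $i\neq j$, so $XV_j$ is the graph $\{v+\lambda(v)e_j : v\in V_j\}$ of the linear functional $\lambda\colon V_j\to\mathbb{C}$ whose values on the basis of $V_j$ are the off-diagonal row-$j$ entries of $X$.

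Next I would compute $\mathrm{Ad}(YX)\cdot([e_j],V_j)=([YXe_j],\,YXV_j)$ and analyze the two factors separately. For the image, $YXe_j=Ye_j=e_j+\vec y$, which as $\vec y$ varies over $V_j$ sweeps exactly the lines with nonzero $j$-th coordinate; this is by definition the Bruhat cell $\sigma_j$. For the kernel, as $\lambda$ varies $XV_j$ ranges over every $n$-dimensional complement of $\mathbb{C}e_j$ (each such complement being the graph of a unique $\lambda$), and the automorphism $Y$ then transports these bijectively onto the $n$-dimensional complements of $\mathbb{C}Ye_j$. Since the parameters $\vec y$ and $\lambda$ are independent, $(Y,X)\mapsto ([Ye_j],\,YXV_j)$ surjects onto precisely the pairs $([v],V)$ with $[v]\in\sigma_j$ and $v\notin V$.

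Finally I would invoke the diffeomorphism $(y,x)\in N^-\times N^+ \mapsto yx\cdot H_0$ already recorded in the section to upgrade surjectivity to bijectivity, so the chart domain equals the claimed set. The second assertion of the proposition is then a mere rewording under the projection identification: the projections in the orbit whose image lies in $\sigma_j$ are by construction exactly $\{([v],V) : [v]\in\sigma_j,\ v\notin V\}$. The only substance beyond unwinding definitions is the linear-algebra observation that $X\mapsto XV_j$ realizes every complement of $\mathbb{C}e_j$ and that $Y$ carries complements to complements; both are routine, so I do not expect any genuine obstacle.
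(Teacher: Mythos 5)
Your argument is correct and follows essentially the same route as the paper's (much terser) proof: first let $N^{+}_{(1j)H_0}$ sweep out all hyperplanes not containing $e_j$ while fixing the image line, then let $N^{-}_{(1j)H_0}$ move the image over the Bruhat cell $\sigma_j$ and carry the complements along. Your version merely makes explicit the matrix form of the two unipotent factors and the graph-of-a-functional description of the kernels, which the paper leaves implicit.
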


\begin{proof} $N_{(1j)H_0}^+ \cdot ([e_j],V_j) = \{[e_j]\} \cdot N_{(1j)H_0}^+$ where $N_{(1j)H_0}^+$ is the set of subspaces 
in $Gr_{n-1}(n)$ which do not contain $e_j$. Therefore if $n \in N_{(1j)H_0}^-$ then $n \left( \{[e_j]\} \cdot N_{(1j)H_0}^+ \cdot V_j\right) $
is the set of subspaces that do not contain $n [e_j]$.
\end{proof}

\begin{corollary} \label{corcharts}The domain $D_j = N_{(1j)H_0}^- N_{(1j)H_0}^+ \cdot (1j) H_0 $ of the chart for the index $j$ 
is Zariski open.
\end{corollary}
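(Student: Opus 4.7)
The plan is to combine the geometric description of $D_j$ provided by the preceding proposition with the standard algebraic structure of Bruhat cells and products of flag varieties. According to that proposition, $D_j$ consists of those pairs $([v], V) \in \mathcal{O}_\mu \subset \mathbb{P}^{n-1}\times Gr_{n-1}(n)$ for which $[v]\in \sigma_j$; the auxiliary condition $v\notin V$ is already imposed by membership in $\mathcal{O}_\mu$, via the realization of the orbit as the open $G$-orbit in $\mathbb{P}^{n-1}\times Gr_{n-1}(n)$ recorded in section \ref{inc}.

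Granted this identification, the verification splits into two routine observations. First, $\sigma_j$ is Zariski open in $\mathbb{P}^{n-1}$: by its description as the set of lines $[v]$ with $v\notin V_j = \mathrm{span}\{e_1,\dots,\widehat{e_j},\dots,e_n\}$, it coincides in homogeneous coordinates with the complement $\{[v_1:\cdots:v_n] : v_j \neq 0\}$ of a coordinate hyperplane, hence is a basic Zariski open set. Second, the first projection $\pi_1 : \mathbb{P}^{n-1}\times Gr_{n-1}(n)\to \mathbb{P}^{n-1}$ is a morphism of algebraic varieties, and its restriction to the Zariski open subvariety $\mathcal{O}_\mu$ remains a morphism. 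Consequently $D_j = \pi_1^{-1}(\sigma_j)\cap \mathcal{O}_\mu$ is the preimage of a Zariski open set under an algebraic morphism, and therefore Zariski open. This upgrades the analytic density/openness established earlier to an honest Zariski statement, which is exactly what the corollary asserts.

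The main (small) obstacle is to confirm that the various set-theoretic identifications in play — between the adjoint orbit, the homogeneous space $G/Z_{H_0}$, the set of rank-one projections in $\mathrm{End}(\mathbb{C}^n)$, and the open subset of $\mathbb{P}^{n-1}\times Gr_{n-1}(n)$ — are compatible with the algebraic structures on each side. This compatibility is essentially built into the preceding constructions: the adjoint action $G\times \mathfrak{g}\to\mathfrak{g}$ is algebraic, the moment-map identification of section \ref{inc} is a polynomial map, and the Bruhat factorization $N_{(1j)H_0}^-\times N_{(1j)H_0}^+\times Z_{(1j)H_0}\to G$ is an isomorphism of algebraic varieties. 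Once these compatibilities are noted, no further calculation is required.
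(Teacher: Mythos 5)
Your argument is correct, and it reaches the conclusion by a more functorial route than the paper does. You identify $D_j$ as $\pi_1^{-1}(\sigma_j)\cap\mathcal{O}_\mu$ inside $\mathbb{P}^{n-1}\times Gr_{n-1}(n)$ and invoke the fact that morphisms pull back Zariski opens to Zariski opens; the paper instead works directly in the affine model $\mathcal{O}=\mathcal{O}(e_1\otimes\varepsilon_1)\subset\mathrm{End}(\mathbb{C}^n)$ and exhibits $D_j$ concretely as a union of non-vanishing loci of linear functionals, namely $D_j=\bigcup_k\{A\in\mathcal{O}:\mathrm{tr}(A(e_k\otimes\varepsilon_j))\neq 0\}$, i.e.\ the locus where the $j$-th row of $A$ is not identically zero. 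The two proofs are complementary rather than identical: yours is shorter and makes the structural reason transparent ($D_j$ is the trace on $\mathcal{O}_\mu$ of a basic open set of $\mathbb{P}^{n-1}$), but its one load-bearing assertion --- that the identification of the matrix orbit with the open subset of $\mathbb{P}^{n-1}\times Gr_{n-1}(n)$ is algebraic, so that $A\mapsto[\mathrm{im}\,A]$ is a morphism --- is precisely what the paper's covering by the sets $\mathcal{O}_k=\{v\otimes\varepsilon\in\mathcal{O}:\varepsilon(e_k)\neq 0\}$ verifies: on $\mathcal{O}_k$ the image of $A$ is spanned by its $k$-th column, which depends linearly on the entries of $A$. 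If you want your proof to be self-contained you should spell out that local description of the projection; once you do, you have essentially reproduced the paper's computation. A small bonus of the explicit approach is the paper's follow-up remark that the complement of $D_j$ is the zero set of the single polynomial $\sum_k\bigl(\varepsilon_j(v)\,\varepsilon(e_k)\bigr)^2$, which does not fall out of the abstract argument.
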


\begin{proof} In the adjoint orbit $\mathcal O = \mathcal O (e_1 \otimes \varepsilon_1)$ of $e_1 \otimes \varepsilon_1$ in $\mathbb C^n \times (\mathbb C^n)^*$
the domain $N_{(1j)H_0}^- N_{(1j)H_0}^+ \cdot (1j) H_0$ is given by the elements $v \notin V_j$, that is,  $\varepsilon_j(v)\neq 0.$
Let $ \mathcal O_k = \{v \otimes \varepsilon \in \mathcal O : \varepsilon(e_j) \neq 0\}.$ 
Clearly $\mathcal O = \cup_k \mathcal O_k$ therefore $D_j = \cup_k D_j \cap  \mathcal O_k .$ However
$$D_j \cap \mathcal O_k = \{ v \otimes \varepsilon \in \mathcal O : \varepsilon_j (v) \otimes \varepsilon(e_k) =0\}$$
$$=  \{ v \otimes \varepsilon \in \mathcal O :  \mathrm{tr} ((v\otimes \varepsilon)(e_k \otimes \varepsilon_j))\neq 0\}.$$
Since  $v \otimes \varepsilon \in \mathcal O \mapsto  \mathrm{tr} ((v\otimes \varepsilon)(e_k \otimes \varepsilon_j))$ is the restriction 
to $\mathcal O$ of a linear map, it follows that $D_j \cap \mathcal O_k$ is Zariski open, and thus so is $D_j$.
\end{proof}

\begin{remark}
Note that the complement of $D_j$ is the set of zeros of the polynomial $\sum_k ((\varepsilon_j(v) \varepsilon(e_k))^2$.
\end{remark}

We may restate corollary \ref{corcharts} as:

\begin{corollary}\label{bru}
The domains of the parametrizations  $D_j$ corresponding  the to  Bruhat cells  are open and dense in  $\mathcal{O}_\mu$.
\end{corollary}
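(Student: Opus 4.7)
The plan is short, because Corollary \ref{corcharts} has already done all the substantive work of showing that each $D_j$ is Zariski open in $\mathcal{O}_\mu$. What remains is to deduce density, and this can be done in two equally natural ways.

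The first route is to observe that $\mathcal{O}_\mu$ is irreducible: it is the image of the connected (hence irreducible) complex algebraic group $G = SL(n+1,\mathbb{C})$ under the morphism $g \mapsto \mathrm{Ad}(g)H_0$, and the image of an irreducible variety under a morphism is irreducible. In an irreducible variety, every nonempty Zariski open set is automatically dense. Since each $D_j$ is nonempty (it contains at least the point $(1j)\cdot H_0$), Corollary \ref{corcharts} then upgrades directly to the claim.

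The second, more concrete route mirrors the argument already carried out at the start of Section \ref{zar} for the base point $H_0$. For the Weyl element $w = (1j)$, the subalgebras $\mathfrak{n}^{\pm}_{wH_0}$ and their exponentials $N^{\pm}_{wH_0}$ satisfy the same Bruhat-type decomposition: $N^-_{wH_0} N^+_{wH_0} Z_{wH_0}$ is open and dense in $G$, because one obtains it by conjugating the decomposition at $H_0$ by a representative of $w$, and such a conjugation sends open dense sets to open dense sets. Passing to the orbit via $g \mapsto g \cdot (wH_0)$, and using $Z_{wH_0} \cdot (wH_0) = wH_0$, one identifies $D_j$ with the image of this open dense subset, hence $D_j$ is open and dense in $\mathcal{O}_\mu$.

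There is essentially no obstacle in this step: the corollary is a direct reformulation of Corollary \ref{corcharts} once density has been added, and density is immediate from either the irreducibility of $\mathcal{O}_\mu$ or from the Bruhat-type decomposition already invoked for $H_0$. The only tiny verification is that this decomposition transfers from $H_0$ to every $wH_0$, which is automatic from $G$-equivariance.
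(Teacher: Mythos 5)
Your proposal is correct, and your second route is essentially the paper's own argument: the paper offers no separate proof of Corollary \ref{bru}, merely declaring it a restatement of Corollary \ref{corcharts}, with density already supplied by the discussion at the start of Section \ref{zar}, where $N_{H_0}^{-}N_{H_0}^{+}Z_{H_0}$ is shown to be open and dense in $G$ for an arbitrary base point and hence $N_{H_0}^{-}N_{H_0}^{+}\cdot H_0$ is open and dense in the orbit; applying this with $w\cdot H_0$ in place of $H_0$ (and noting $\mathcal{O}(w\cdot H_0)=\mathcal{O}(H_0)$) is exactly your equivariance step. Your first route, via irreducibility of $\mathcal{O}_\mu$ as the image of the connected group $G$ under $g\mapsto \mathrm{Ad}(g)H_0$, is a genuinely different and arguably cleaner argument that the paper does not use: it buys density directly from Zariski openness (Corollary \ref{corcharts}) plus nonemptiness, without re-invoking the Bruhat-type decomposition, and it also makes explicit that the density holds in both the Zariski and the classical topologies, a point the paper leaves implicit. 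Both arguments are sound; the only verification worth recording in either case is the one you already flag, namely that the decomposition transfers from $H_0$ to each $w\cdot H_0$, which follows from conjugating by a representative of $w$.
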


\section{The potential viewed as a rational map}\label{pot}

Once the adjoint orbit has been compactified to a projective variety, 
we can no longer consider the potential as a holomorphic map, not even if we enlarge the target to 
$\mathbb P^1$. For the case of the minimal flag,  
 $\mathcal O(H_0)$ gets compactified to a product of projective spaces. 
The following 2 elementary lemmas show that the potential can not be extended holomorphically to the compactification.

\begin{lemma}\label{pot1}
Let $n>1$. Then any holomorphic map $\mathbb P^n \rightarrow \mathbb P^1$ is constant.
\end{lemma}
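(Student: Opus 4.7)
The plan is to reduce to an intersection-theoretic obstruction in projective space. First I would invoke GAGA (Serre) to replace the holomorphic map $f\colon \mathbb{P}^n \to \mathbb{P}^1$ by a morphism of algebraic varieties. Writing $f$ in homogeneous coordinates, this expresses it as $f = [F_0 : F_1]$, where $F_0, F_1$ are homogeneous polynomials on $\mathbb{P}^n$ of the same degree $d \geq 0$, having no common zero. Equivalently, one can phrase the setup via line bundles: since $\mathrm{Pic}(\mathbb{P}^n) = \mathbb{Z}\cdot \mathcal{O}(1)$, the pullback $f^*\mathcal{O}_{\mathbb{P}^1}(1)$ is some $\mathcal{O}_{\mathbb{P}^n}(d)$, and $f$ is given by the two sections of this line bundle obtained by pulling back the standard coordinates on $\mathbb{P}^1$; these two sections must have empty base locus.

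Next I would split on the value of $d$. If $d = 0$, then $F_0$ and $F_1$ are constants, so $f$ is manifestly constant and the lemma holds. Suppose instead that $d \geq 1$. Then $V(F_0)$ and $V(F_1)$ are hypersurfaces in $\mathbb{P}^n$, and the projective dimension theorem guarantees that their intersection $V(F_0) \cap V(F_1)$ has dimension at least $n-2$. Because the hypothesis $n > 1$ gives $n - 2 \geq 0$, this intersection is non-empty, which produces a common zero of $F_0$ and $F_1$. This contradicts the requirement that $[F_0 : F_1]$ define a morphism, ruling out $d \geq 1$. Therefore $d = 0$ and $f$ is constant.

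I do not anticipate any genuine obstacle: the statement is a classical instance of the rigidity of maps out of $\mathbb{P}^n$, and both ingredients (GAGA, together with the non-emptiness of intersections of hypersurfaces in $\mathbb{P}^n$ for $n \geq 2$) are entirely standard. The only mildly delicate issue is the initial passage from the holomorphic to the algebraic category, and this could equivalently be carried out via Chow's theorem applied to the graph of $f$, or bypassed altogether by the observation that a non-constant holomorphic $f$ would have to omit no point of $\mathbb{P}^1$, combined with the fact that every holomorphic function on the compact manifold $\mathbb{P}^n$ is constant.
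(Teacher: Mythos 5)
Your proof is correct and follows essentially the same route as the paper's: the paper also argues that two distinct fibers of the map would be positive-dimensional subvarieties of $\mathbb P^n$ forced to meet by B\'ezout (the projective dimension theorem), contradicting disjointness of fibers, and you merely make the GAGA/algebraization step and the degree-$d$ analysis explicit. One small caveat: your closing aside does not actually bypass algebraization, since knowing that a nonconstant $f$ omits no point of $\mathbb P^1$ yields surjectivity but no contradiction on its own.
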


\begin{proof} Consider a holomorphic map $g \colon \mathbb P^n \rightarrow \mathbb P^1$ and let 
$X_1 = g^{-1}(p_1)$ and $X_2 =g^{-1}(p_2)$ be two of its fibers. Then by Bezout's theorem
$X_1\cap X_2 \neq 0$ therefore $p_1=p_2$.
\end{proof}

\begin{lemma}\label{pot2} Let $n>1$.
Then  any holomorphic map $  \mathbb P^n \times { \mathbb {P}^n}^\ast  \rightarrow  \mathbb P^1$ is 
constant.
\end{lemma}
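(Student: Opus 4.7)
The plan is to reduce to the previous lemma by slicing with the two natural projections. Let $F\colon \mathbb P^n\times (\mathbb P^n)^\ast \to \mathbb P^1$ be holomorphic. Since $(\mathbb P^n)^\ast$ is (non-canonically) isomorphic to $\mathbb P^n$ as a complex manifold, both factors fall under the hypothesis of Lemma \ref{pot1}.

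First I would fix $q\in (\mathbb P^n)^\ast$ and consider the restriction $F_q\colon \mathbb P^n\to\mathbb P^1$, $p\mapsto F(p,q)$. By Lemma \ref{pot1}, since $n>1$, this map must be constant; denote its value by $\phi(q)$. This defines a function $\phi\colon (\mathbb P^n)^\ast\to \mathbb P^1$ with the property that $F(p,q)=\phi(q)$ for all $(p,q)$.

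Next I would check that $\phi$ is holomorphic, which is immediate since $\phi$ equals the composition of $F$ with the inclusion $q\mapsto (p_0,q)$ for any chosen basepoint $p_0\in\mathbb P^n$. Applying Lemma \ref{pot1} once more to $\phi\colon (\mathbb P^n)^\ast\cong\mathbb P^n\to\mathbb P^1$ yields that $\phi$ is constant, whence $F$ is constant.

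The only place where an obstacle could arise is the verification that each slice is constant, but this is handed to us by Lemma \ref{pot1}; the rest is a routine Fubini-style argument. The hypothesis $n>1$ is used exactly at the two invocations of Lemma \ref{pot1}, which is consistent with the counterexample $n=1$ given by either projection $\mathbb P^1\times\mathbb P^1\to\mathbb P^1$.
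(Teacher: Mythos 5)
Your argument is correct and is essentially the same as the paper's: restrict to slices $\mathbb P^n\times\{q\}$, apply Lemma \ref{pot1} to conclude each slice is constant, observe the induced map on $(\mathbb P^n)^\ast$ is holomorphic, and apply Lemma \ref{pot1} again. The explicit justification that $\phi$ is holomorphic (as a composition with $q\mapsto(p_0,q)$) is a welcome bit of care that the paper leaves implicit.
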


\begin{proof}
Suppose $f \colon  \mathbb P^n \times { \mathbb {P}^n}^\ast  \rightarrow  \mathbb P^1$ is holomorphic, and 
take $p \in \mathbb {\mathbb {P}^n}^\ast$, then  the restriction $f\vert_{\mathbb P^n \times \{p\}}$ is holomorphic, thus constant
by lemma \ref{pot1}. Hence, $f$ factors through the projection  $ \mathbb P^n \times { \mathbb {P}^n}^\ast \rightarrow  { \mathbb {P}^n}^\ast $ and induces a holomorphic map ${ \mathbb {P}^n}^\ast  \rightarrow  \mathbb P^1$
which also is constant by lemma \ref{pot1}. Thus $f$ is constant.
\end{proof}

Consequently, we aim to extend the potential to the projectivization as a rational map. 
This can be done as follows. Set $V= \mathbb C^n$.

\begin{theorem}\label{rat}
 The rational function on  $V\otimes V^{\ast }$ that
coincides with  the potential $f_{H}$ on $\mathcal{O}\left( v_{0}\otimes \varepsilon
_{0}\right) $ 
is given by 
\begin{equation*}
R_{H}\left( A\right) =\frac{\mathrm{tr}\left( A\rho _{\mu }\left( H\right)
\right) }{\mathrm{tr}\left( A\right) }
\end{equation*}%
for $A\in V\otimes V^{\ast }=\mathrm{End}\left( V\right) $.
\end{theorem}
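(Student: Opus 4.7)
The plan is to pull back the potential $f_H$ along the identification of $\mathcal{O}_\mu$ with the conjugation orbit of $v_0\otimes\varepsilon_0 = e_1\otimes\varepsilon_1$ inside $V\otimes V^* = \End(V)$ recalled at the end of section \ref{inc}, and then to recognise the resulting expression as the restriction of a ratio of two linear forms. The guiding principle is that this orbit sits inside the affine hyperplane $\{\tr(A)=1\}$, so a linear function on $\End(V)$ restricts to the orbit in the same way as that function divided by $\tr(A)$; the latter has the advantage of being projectively homogeneous and hence of descending to the compactification that will be used in the following theorem.

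The first step is the elementary affine identity $H_0 = (n+1)(v_0\otimes\varepsilon_0) - I_V$ in $\End(V)$. Conjugating by $g\in G$ yields $gH_0 g^{-1} = (n+1)A - I_V$, where $A = g(v_0\otimes\varepsilon_0)g^{-1}$, so this is the explicit bijection between the two realisations of the orbit. Because the Cartan--Killing form on $\mathfrak{sl}(n+1)$ is proportional to $\tr(\,\cdot\,\cdot\,)$ and $\tr(H)=0$, one computes
\[
f_H(X) \;=\; \langle H,\,(n+1)A - I_V\rangle \;\propto\; (n+1)\,\tr(AH),
\]
which, once $\rho_\mu$ is interpreted as the representation of $\mathfrak{g}$ on $V$ of highest weight $\mu$ (so that $\rho_\mu(H)$ is the matrix $H$ itself acting on $V$), is a constant multiple of $\tr(A\,\rho_\mu(H))$.

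For the rational extension I use that $\tr$ is invariant under conjugation, whence $\tr(A) = \tr(v_0\otimes\varepsilon_0) = \varepsilon_0(v_0) = 1$ identically on the orbit; dividing by $\tr(A)$ therefore leaves the value unchanged there, and one obtains $f_H(A) = \tr(A\,\rho_\mu(H))/\tr(A) = R_H(A)$ on $\mathcal{O}(v_0\otimes\varepsilon_0)$. The right-hand side is a quotient of two linear forms on $V\otimes V^*$, hence a rational function on the whole ambient vector space, whose pole locus $\{\tr(A)=0\}$ is disjoint from the orbit. The main anticipated obstacle is purely bookkeeping: the constant $(n+1)$ and the proportionality factor between $\langle\cdot,\cdot\rangle$ and $\tr(\,\cdot\,\cdot\,)$ must be absorbed into the normalisations of $\rho_\mu$ and of the Cartan--Killing form that have been fixed in earlier sections, and no conceptual step is needed beyond the affine identity for $H_0$ and the invariance of the trace. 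The essential conceptual point, however, is the choice of $\tr(A)$ as the denominator: it is what turns the linear expression $\tr(A\,\rho_\mu(H))$ into a rational function that is well defined on the Segre compactification $\mathbb{P}^n \times {\mathbb{P}^n}^*$ rather than merely on the affine space $V\otimes V^*$, and this is precisely what is needed for the $B$-side picture developed in the following sections.
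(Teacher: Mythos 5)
Your proposal is correct and follows essentially the same route as the paper: both express the potential on the orbit of $v_{0}\otimes \varepsilon _{0}$ as the linear form $\mathrm{tr}\left( A\rho _{\mu }\left( H\right) \right) $, observe that $\mathrm{tr}\left( A\right) =\varepsilon _{0}\left( v_{0}\right) =1$ identically on the orbit, and divide by this second linear form precisely so that the quotient becomes homogeneous of degree $0$ and hence descends to the projectivization. The only difference is cosmetic: you justify the identity $f_{H}\propto \mathrm{tr}\left( A\rho _{\mu }\left( H\right) \right) $ via the explicit affine relation $H_{0}=(n+1)\left( v_{0}\otimes \varepsilon _{0}\right) -I$ and the proportionality of the Cartan--Killing form with the trace form, whereas the paper invokes the representation-theoretic identification $f_{H}\left( v\otimes \varepsilon \right) =\varepsilon \left( \rho _{\mu }\left( H\right) v\right) $ directly; your version makes the constant factor (which the paper also only tracks up to a multiple) more visible.
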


\begin{proof}
\begin{enumerate}
\item \label{1} Given 2 vector spaces   $V$ and $W$ let $\mathbb{P}\left(
V\right) $ and $\mathbb{P}\left( W\right) $ be the corresponding projective spaces. Then,
 $\mathbb{P}\left( V\right) \times \mathbb{P}\left( W\right) $  is in bijection with
the subset of  $\mathbb{P}\left( V\otimes
W\right) $ of subspaces generated by decomposable elements $%
v\otimes w$, $v\in V$ and $w\in W$.
The bijection is given by
\begin{equation*}
\left( \langle v\rangle ,\langle w\rangle \right) \in \mathbb{P}\left(
V\right) \times \mathbb{P}\left( W\right) \mapsto \langle v\otimes w\rangle
\in \mathbb{P}\left( V\otimes W\right) .
\end{equation*}

\item The flag  $\mathbb{F}_{H_{\mu }}$ gets identified with the projective 
orbit of the space of maximal weight $V_{\mu }=\langle v_{0}\rangle
\in \mathbb{P}\left( V\right) $ ($V$ = the representation space).
The dual flag $\mathbb{F}_{H_{\mu }}^{\ast }$ gets identified to the projective 
orbit of the space of minimal weight  $V_{\mu ^{\ast }}=\langle
\varepsilon _{0}\rangle \in \mathbb{P}\left( V^{\ast }\right) $. 

\item  The adjoint orbit  $\mathcal{O}\left( H_{\mu }\right) $ gets
identified with the open orbit  in  $\mathbb{F}_{H_{\mu }}\times \mathbb{F%
}_{H_{\mu }}^{\ast }$ by the diagonal action of $G$. 
Via the bijection of item \ref{1} the compactification corresponds to  the 
projectivization of the orbit $\mathcal{O}\left( v_{0}\times
\varepsilon _{0}\right) $ of $v_{0}\otimes \varepsilon _{0}\in V\otimes
V^{\ast }$. 

\item The potential  $f_{H}$ on $\mathcal{O}\left( v_{0}\times \varepsilon
_{0}\right) $ can be written as 
\begin{equation}
f_{H}\left( v\otimes \varepsilon \right) =\varepsilon \left( \rho _{\mu
}\left( H\right) v\right) =\mathrm{tr}\left( \left( v\otimes \varepsilon
\right) \rho _{\mu }\left( H\right) \right)   \label{forfaga}
\end{equation}%
where $\rho _{\mu }$ is the representation on  $V$. 

\item The function  $f_{H}$ of (\ref{forfaga}) does not projectivize,
that is, it does not induce a function on  $\mathbb{P}\left( V\otimes
V^{\ast }\right) $ since it is linear (homogeneous of degree $1$). To
projetivize the potential we must divide $f_{H}$ by a linear function that is 
constant on the orbit $\mathcal{O}\left( v_{0}\otimes \varepsilon
_{0}\right) $, therefore obtaining a  rational homogeneous function (of degree $0$%
) which coincides with $f_{H}$ on the orbit and projectivizes.

\item A linear function that can do the job is $\mathrm{tr}%
\left( v\otimes \varepsilon \right) =\varepsilon \left( v\right) $. This 
linear functional is constant $=1$ on  $\mathcal{O}\left( v_{0}\otimes
\varepsilon _{0}\right) $, since if  $v\otimes \varepsilon \in \mathcal{O}%
\left( v_{0}\otimes \varepsilon _{0}\right) $ then there exists  $g\in G$ such 
that  
\begin{equation*}
v\otimes \varepsilon =\rho _{\mu }\left( g\right) v_{0}\otimes \rho _{\mu
}^{\ast }\left( g\right) \varepsilon _{0}=\rho _{\mu }\left( g\right)
v_{0}\otimes \varepsilon _{0}\circ \rho _{\mu }\left( g^{-1}\right) ,
\end{equation*}%
thus $\varepsilon \left( v\right) =\varepsilon _{0}\circ \rho _{\mu
}\left( g^{-1}\right) \left( \rho _{\mu }\left( g\right) v_{0}\right)
=\varepsilon _{0}\left( v_{0}\right) =1$. 

\item Therefore, the rational function on  $V\otimes V^{\ast }$ that
coincides with  $f_{H}$ on $\mathcal{O}\left( v_{0}\otimes \varepsilon
_{0}\right) $ and projectivizes is given by 
\begin{equation*}
R_{H}\left( A\right) =\frac{\mathrm{tr}\left( A\rho _{\mu }\left( H\right)
\right) }{\mathrm{tr}\left( A\right) }
\end{equation*}%
for $A\in V\otimes V^{\ast }=\mathrm{End}\left( V\right) $.
\end{enumerate}
\end{proof}

%

\section{Algebraic compactifications and the conjecture of \cite{BG}}

\noindent{\bf The orbit  $\mathcal O_\mu$.}  We can also  compactify of  $\mathcal O_\mu$  from 
an algebraic point of view. 
Let $X_n = \mathcal O(H_0) $ for $H_0=\Diag(n, -1, \dots, -1)$.
Then $A$ belongs to $X_n$ if and only if it satisfies the equations of the minimal polynomial $(A-nI)(A+I)=0$.
To compactify  to a projective variety $\overline{X}_n$ we add  an extra variable $t$ and homogenise the
 equations to $(A-ntId)(A+tId) =0$. The set $F_n = \overline{X}_n \setminus X_n$ has a complete description. Taking $t=0$
we get that it is gives by the system $A^2=0$. Set-theoretically in $\mathfrak{sl}(n+1, \mathbb C)$ the system  $A^2=0$
 defines all trace-zero matrices such that $t^2$  divides the minimal polynomial.
In \cite{BG} it is proven that this algebraic compactification produces the Segre embbeding (assuming the additional hypothesis of 
smoothness) and it is proven computationally using Macaulay 2 for the cases of $\mathfrak{sl}(n,\mathbb C)$ for $n<10$
that this algebraic compactification does produce the Segre embedding. However, it is left as a conjecture to show that this works in full generality.

The following result follows from the explicit calculations presented in the next section, and solves
the conjecture of \cite{BG} in the affirmative:

\begin{theorem}\label{seg}
The embbeding  $\mathcal{O}_\mu \hookrightarrow \mathbb{P}^n\times  {\mathbb{P}^n}^*$ obtained by Lie 
theoretical methods agrees with the Segre embbeding obtained algebro-geometrically by homogenisation of the ideal cutting out 
the orbit $\mathcal O_\mu$ as an affine variety in $\mathfrak{sl}(n+1)$.
\end{theorem}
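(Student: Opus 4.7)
The plan is to exhibit a single linear isomorphism of ambient projective spaces that simultaneously carries both compactifications onto a common subvariety, namely the Segre image. Consider
\[\phi\colon \mathfrak{sl}(n+1)\oplus \mathbb{C}\cdot t \xrightarrow{\sim} \mathfrak{gl}(n+1)=V\otimes V^*,\qquad (A,t)\mapsto A+tI,\]
which induces an isomorphism $\mathbb{P}^{(n+1)^2-1}\xrightarrow{\sim} \mathbb{P}(V\otimes V^*)$. Under $\phi$ the affine chart $t=1$ of the algebraic compactification is carried onto the affine subspace of matrices with trace $n+1$ inside $V\otimes V^*$.

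First I would check that $\phi$ sends the affine orbit $X_n$ onto the orbit of rank-$1$ tensors with nonzero trace. Since $H_0+I=(n+1)\,e_1\otimes\varepsilon_1$, for every $A=gH_0g^{-1}\in X_n$ one has $A+I=(n+1)\,g(e_1\otimes\varepsilon_1)g^{-1}$, a rank-$1$ tensor of trace $n+1$; by Theorem~\ref{rat} and the last item of section~\ref{inc}, this is exactly the image of $\mathcal{O}_\mu$ under the Lie-theoretic embedding inside $\mathbb{P}(V\otimes V^*)$. Next I would transport the homogenized equation $(A-ntI)(A+tI)=0$ through $\phi$: setting $M=A+tI$ it becomes $(M-(n+1)tI)M=0$, equivalently $M^{2}=\mathrm{tr}(M)\cdot M$ (using $\mathrm{tr}\,M=(n+1)t$). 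This is precisely the quadratic identity satisfied by decomposable tensors, $(v\otimes\varepsilon)^2=\varepsilon(v)(v\otimes\varepsilon)$. On the open chart $\mathrm{tr}(M)\neq 0$ the relation forces $M/\mathrm{tr}(M)$ to be a trace-$1$ idempotent, hence a rank-$1$ projection, so that $[M]$ lies on the Segre image.

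Combining these steps, $\phi$ identifies the open part of $\overline{X}_n$ with the open part of the Segre variety; irreducibility of $\overline{X}_n$ together with the equality of dimensions ($2n$) then forces $\phi(\overline{X}_n)$ to coincide with the Segre image of $\mathbb{P}^n\times{\mathbb{P}^n}^*$. The main obstacle is the behavior on the boundary $t=0$: the equation reduces there to $M^{2}=0$, which admits higher-rank nilpotent solutions, so the scheme cut out by the naive homogenized ideal could in principle be strictly larger than the Segre variety. Ruling this out requires combining the irreducibility of $\overline{X}_n$ with the explicit coordinate computations carried out first for $\mu=(2,-1,-1)$ and then for general $\mu=(n,-1,\dots,-1)$ in what follows, which match the homogenized polynomials with the Segre $2\times 2$ minors of $M$ up to the linear change of variables $\phi$.
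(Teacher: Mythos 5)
Your argument is correct in substance but proceeds quite differently from the paper. The paper proves the theorem by brute-force coordinate computation: it writes the Lie-theoretic embedding of $g\cdot(v_0\otimes\varepsilon_0)$ explicitly, shows (via the classical adjoint) that the homogeneous coordinates of the composite $\mathcal{O}_\mu\to\mathbb{P}^n\times{\mathbb{P}^n}^*\to\mathbb{P}^{(n+1)^2-1}$ are exactly the entries $a_{i1}(\adj g)_{1j}$ of the rank-one matrix $\rho(g)v_0\otimes\rho^*(g)\varepsilon_0$, i.e.\ Segre coordinates, and reads the theorem off from that. You instead introduce the linear isomorphism $\phi\colon(A,t)\mapsto A+tI$ of ambient spaces, observe that $H_0+I=(n+1)\,e_1\otimes\varepsilon_1$ so that $\phi$ carries the affine orbit onto the rank-one tensors with nonvanishing trace, transport the homogenized minimal-polynomial relation to $M^2=\tr(M)\,M$, and finish by density and irreducibility. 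Your route is more conceptual and, notably, it addresses head-on the part the paper leaves implicit, namely why the \emph{homogenization} side lands on the Segre variety; the paper's computation mainly establishes the Lie-theoretic side and the identification of the two is asserted rather than spelled out. What the paper's approach buys is the explicit formula for the embedding (and hence for the rational potential $R_H$), which is needed elsewhere; what yours buys is a transparent, computation-free identification of the two compactifications.

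Two small points. First, your closing worry about the boundary $t=0$ is resolved more cleanly than you suggest: since the theorem concerns the homogenization of the \emph{ideal} of $X_n$ (equivalently, the projective closure of $X_n$), the variety $\overline{X}_n$ is by definition the closure of $\phi^{-1}(\{[v\otimes\varepsilon]:\varepsilon(v)\neq 0\})$, and the density of that locus in the (irreducible) Segre variety already finishes the proof; no boundary analysis is needed. The higher-rank nilpotent solutions of $M^2=0$ only arise if one takes the zero locus of the naively homogenized \emph{generators}, which is genuinely larger for $n\geq 3$. Second, and for the same reason, your final sentence claiming that the homogenized polynomials ``match the Segre $2\times 2$ minors of $M$ up to $\phi$'' is not literally true as a statement about ideals or even zero sets — $V(M^2-\tr(M)M)$ strictly contains the Segre variety — so you should either drop that claim or replace it by the saturation/closure argument above.
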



\noindent{\bf Other adjoint orbits.}  As proved in \cite[Sec. 3]{GGSM2}  the orbit of any  trace zero diagonalizable matrix $H_0\ne 0$ embeds   as an open dense  subset
of $\mathbb F\times \mathbb F^*$ with $\mathbb F$ a certain flag. This is  the best compactification, but  we wish to compare with other
compactifications  obtained via algebraic methods.
To compactify algebraically  \cite{BCG}  used the process of homogenization of the ideal defining the orbit inside its Lie algebra. 
A matrix $A$ belongs to the adjoint orbit  $H_0$ if and only if it satisfies  the equations 
of  the minimal polynomial of $H_0$. Taking the entries of the minimal polynomial determines an ideal $I$
cutting out $\mathcal O(H_0)$ as an affine variety inside $\mathfrak{sl}(n, \mathbb C)$. 
We may then obtain a compactification by homogenizing the ideal $I$. In general the resulting compactification 
will be very singular, see
\cite[Sec. 6]{BCG}. So, it is not possible to generalize the conjecture of \cite{BG} for all semisimple 
adjoint orbits.

\section{Lie theoretical  compactification and the Segre embedding}

In this section we present the explicit Lie theoretical calculation of the Segre embedding, first with the case $n=3$ and then 
the general case. 
\vspace{5mm}

 \noindent{\bf The case of $\mathfrak{sl}(3,\mathbb{C})$}
 Let $G= SL(3,\mathbb{C})$ and let  $g$ be an element of  $G$. We write $g$ as 
 \begin{equation}\label{sl3-generic}
 g=\left(
\begin{array}{ccc}
 a_{11} & a_{12}  & a_{13}   \\
 a_{21}    & a_{22}     & a_{23}      \\
 a_{31}       & a_{32}        & a_{33}         \\
\end{array}
\right), 
 \end{equation}
where $$\det g =a_{11}   a_{22} a_{33}       -a_{11} a_{23}      a_{32}+a_{21}    a_{13}   a_{32}-a_{21}    a_{33}         a_{12}-a_{31}       a_{13}   a_{22}+a_{31}       a_{23}      a_{12}=1.$$ 
 
 The  inverse of $g$ is given by
 \begin{equation}
 g^{-1}=\left(
\begin{array}{ccc}
 a_{33}         a_{22}-a_{23}      a_{32}        & a_{13}   a_{32}-a_{33}         a_{12}  & a_{23}      a_{12}-a_{13}   a_{22}     \\
 a_{31}       a_{23}-a_{21}    a_{33}         & a_{11} a_{33}-a_{31}       a_{13}   & a_{21}    a_{13} -a_{11} a_{23}      \\
 a_{21}    a_{32}-a_{31}       a_{22}     & a_{31}       a_{12}-a_{11} a_{32}        & a_{11} a_{22}-a_{21}    a_{12}  \\
\end{array}
\right) \text{.}
 \end{equation}

Let us describe the orbit  $G\cdot (v_0\otimes \varepsilon_0 )=\rho(g)v_0\otimes \rho^*(g)\varepsilon_0$.
Recall that $v_0=(1,0,0)^T$ and $\varepsilon_0=(1,0,0)$. The actions are
\begin{equation}
\rho(g)v_0=gv_0= 
\left(
\begin{array}{c}
 a_{11} \\
 a_{21}    \\
 a_{31}       \\
\end{array}
\right),
\end{equation} and
\begin{equation}\label{dual-action}
\rho^*(g)\varepsilon_0=\varepsilon_0\circ       g^{-1}=
\left(
\begin{array}{ccc}
 a_{33}         a_{22}-a_{23}      a_{32}        & a_{13}   z-a_{33}         a_{12}  & a_{23}      a_{12}-a_{13}   a_{22}     \\
\end{array}
\right).
\end{equation}
Therefore,
\begin{equation} \label{mat-sl3}
\rho(g)v_0\otimes \rho^*(g)\varepsilon_0=
\end{equation}
$$\left(
\begin{array}{ccc}
 a_{11} a_{33}         a_{22}-a_{11} a_{23}      a_{32}        & a_{11} a_{13}   z-a_{11} a_{33}         a_{12}  & a_{11} a_{23}      a_{12}-a_{11} a_{13}   a_{22}     \\
 a_{21}    a_{33}         a_{22}-a_{21}    a_{23}      a_{32}        & a_{21}    a_{13}   z-a_{21}    a_{33}         a_{12}  & a_{21}    a_{23}      a_{12}-a_{21}    a_{13}   a_{22}     \\
 a_{31}       a_{33}         a_{22}-a_{31}       a_{23}      a_{32}        & a_{31}       a_{13}   z-a_{31}       a_{33}         a_{12}  & a_{31}       a_{23}      xa_{12}-a_{31}       a_{13}   a_{22}     \\
\end{array}
\right)\text{.}
$$
The eigenvalues of  matrix 
$(\ref{mat-sl3})$ are:
\begin{itemize}
\item $1$ associated to the vector $\mu=(a_{11},a_{21},a_{31})$ 
and
\item $0$ (zero) associated to the vectors $\xi=(a_{12},a_{22},a_{23})$ and $\eta=(a_{13},a_{23},a_{33})$. 
\end{itemize}

Since the determinant of matrix  $(\ref{sl3-generic})$ is nonzero, we have that the line generated by      $\mu$ 
is transversal to the plane generated by
 $\xi$ and $\eta$ (this is the geometric description of the adjoint orbit).
Using the moment map, we verify that the orbit of the tensor product is isomorphic  
to  $\mathrm{Ad}(G)\cdot H_0$, for $H_0$ chosen apropriately (as a multiple of  $\Diag(2,-1,-1)$).

Thus, we obtain and embedding of the minimal orbit 
 $$\varphi:G\cdot (v_0\otimes \varepsilon_0 )\rightarrow \mathbb{P}^2\times G_2(\mathbb{C}^3),$$ given by 
  $\varphi(g\cdot (v_0\otimes \varepsilon_0 ))=(\mathrm{span}\{\mu\},\mathrm{span}\{ \xi,\eta \}   )$.  

An identification between $G_2(\mathbb{C}^3)$ and $\mathbb{P}^2$ is obtained by taking each 2-plane $P$ in $G_2(\mathbb{C}^3)$ to the line  $\ell_P$ generated by the normal vector  $P$. 
Explicitly, if $P=\mathrm{span}\{\xi,\eta\}$, then $\ell_P$ is generated by the vector 
\begin{equation} \label{lp}
(a_{33}         a_{22}         -a_{23}      a_{32}         )\vec{i}+ (a_{13}   a_{32}         -a_{33}         a_{21})\vec{j} + (a_{23}      a_{12}-a_{13}   a_{22}         )\vec{k}
\end{equation}
$$=(a_{33}         a_{22}         -a_{23}      a_{32}         ,a_{13}   a_{32}         -a_{33}         a_{21},a_{23}      a_{21}-a_{13}   a_{22}     ) \text{.}
$$
Observe that $(\ref{lp})$ recovers the result of  $(\ref{dual-action})$.

Note that if a vector $\mu$ belongs to the plane 
generated by  $\{\xi, \eta\}$ then $\mu$ is orthogonal to the vector described in $(\ref{lp})$, that is , 
\begin{equation}
(a_{11},a_{21},a_{31})\cdot (a_{33}         a_{22}         -a_{23}      a_{32}         ,a_{13}   a_{32}         -a_{33}         a_{21},a_{23}      a_{21}-a_{13}   a_{22}     ) =
\end{equation}
$$
-a_{11} a_{23}      a_{32}         +a_{11} a_{33}         a_{22}         +a_{21}    a_{13}   a_{32}         -a_{21}    a_{33}         a_{12}-a_{31}       a_{13}   a_{22}         +a_{31}       a_{23}      a_{12}  = 0.
$$
This expression corresponds to the determinant of a 3x3 matrix (contained in the complement of the orbit inside $\mathbb{P}^2\times G_2(\mathbb{C}^3)$, that is,  representing the case of a line contained in a plane).

Using the previous identification, we can now obtain the Segre embedding by taking the composite
\begin{equation}\label{composta-sl3}
G\cdot (v_0\otimes \varepsilon_0 )\rightarrow \mathbb{P}^2\times G_2(\mathbb{C}^3)\rightarrow \mathbb{P}^2\times \mathbb{P}^2 \rightarrow \mathbb{P}^8.
\end{equation}

The image of  $g\cdot(v_0\otimes \varepsilon_0)$ by the composite $(\ref{composta-sl3})$ in $\mathbb{P}^8$ 
has  homogeneous coordinates that are the same as the entries of matrix $(\ref{mat-sl3})$.
\vspace{5mm}

{\bf The rational map: } We  describe $f_H$, for  $H=(3,-2,-1)$.  (We  chose this instead of  $H=(1,0,-1)$ to avoid 
the vanishing of monomials that would be caused by  the zero, but the same method applied to any choice of $H$). 
The rational map $R_H$ associated to  $f_H$ is given by 
\begin{equation}\label{rat-sl3}
R_H(v\otimes \varepsilon)= \frac{\mathrm{tr}((v\otimes \varepsilon) \rho(H))}{\mathrm{tr}(v\otimes \varepsilon)}=
\end{equation}
$$\frac{3a_{11}a_{33}a_{22}         -3a_{11}a_{23}a_{32}         -2a_{21}a_{13}a_{32}         +2a_{21}a_{33}a_{12}-a_{31}a_{23}a_{12}+a_{31}a_{13}a_{22}         }{a_{11} a_{33}         a_{22}     -a_{11} a_{23}      a_{32}        +a_{21}    a_{13}   a_{32}      -a_{21}    a_{33}         a_{12} +a_{31}       a_{23}      a_{12}  -a_{31}       a_{13}   a_{22}         }.
$$

The denominator of $(\ref{rat-sl3})$ is the  determinant, which equals  1 if the point belongs to the orbit,
 and vanishes if the point belongs to the complement of the orbit. Thus, for points in the orbit  $R_H$ coincides with $f_H$ (up to a  constant multiple). Consequently, we can use the composite $(\ref{composta-sl3})$ to define a map to $\mathbb P^1$,
  factoring through the  Segre embedding:
\begin{equation}
 \mathbb{P}^2\times G_2(\mathbb{C}^3)\rightarrow \mathbb{P}^1, 
\end{equation} 
by
\begin{equation}
(\mathrm{span} \{\mu\}, \mathrm{span}\{\xi,\eta\} )\mapsto 
\end{equation}
$$[3a_{11}a_{33}a_{22}         -3a_{11}a_{23}a_{32}         -2a_{21}a_{13}a_{32}         +2a_{21}a_{33}a_{12}-a_{31}a_{23}a_{12}+a_{31}a_{13}a_{22}        :$$
$$ a_{11} a_{33}         a_{22}     -a_{11} a_{23}      a_{32}        +a_{21}    a_{13}   a_{32}         -a_{21}    a_{33}         a_{12}
+a_{31}       a_{23}      a_{12}-a_{31}       a_{13}   a_{22}           ] .
$$
\\
\\

\noindent{\bf The general case: $\mathfrak{sl}(n, \mathbb C)$}

For an  $n\times n$ matrix $A$, we denote by $A(i|j)$ 
the matrix obtained by removing the  $i$-th row and the  $j$-th column of $A$. 
Recall that the $(i,j)$-cofactor  of  $A$ is the scalar 
\begin{equation}
C_{ij}=(-1)^{i+j}\det A(i|j).
\end{equation} 
We denote by $C=C_{ij}$ the matrix of cofactors.
The classical adjoint of $A$ is the transpose of the matrix of cofactors: 
\begin{equation}
(\adj A)_{ij}=C_{ji}.
\end{equation}
We will use the following 2 well known properties of the classical adjoint:
\begin{equation}
\sum_{i=1}^n{A_{ik}(\adj A)_{ji}}=\delta_{kj}\det A;
\end{equation}

\begin{equation}
A\,(\adj A)=(\det A)\,\id.
\end{equation}
In particular, for a fixed $j$ we obtain $\sum_{i=1}^n{A_{ij}(\adj A)_{ji}}=\det A$ (expansion in cofactors with respect to  column 
 $j$).
\\

{\bf The general  Segre embedding:} Let $G=SL(n,\mathbb{C})$ and $g=(a_{ij})\in G$. We denote by  $w_i=(a_{1i},\ldots a_{ni})$ the column vectors of  $g$. Since $\det g =1$, we have that $g^{-1}=\adj g$. 

Let  $v_0=(1,0,\ldots,0)\in \mathbb{C}^n$ and  $\varepsilon_0=(1,0,\ldots,0)\in (\mathbb{C}^n)^*$. We describe the orbit 
 $G\cdot (v_0\otimes \varepsilon_0)$. We have:
\begin{equation}
\rho(g)v_0=gv_0=(a_{11},a_{21},\ldots,a_{n1})=w_1;
\end{equation}
\begin{equation}
\rho^*(g)\varepsilon_0=\varepsilon_0 \circ g^{-1}=\varepsilon_0 \circ \adj g=((\adj g)_{11},(\adj g)_{12},\ldots,(\adj g)_{1n} ).
\end{equation}

Therefore,
\begin{equation}\label{mat-tensor}
\rho(g)v_0\otimes \rho^*(g)\varepsilon_0 = M=M_{ij}= a_{i1} (\adj g)_{1j}.
\end{equation}
Observe that
\begin{equation}\label{det1}
\mathrm{tr} M = \sum_{i=1}^n {M_{ii}} = \sum_{i=1}^n{ a_{i1} (\adj g)_{1i} }=\det g=1  \text{.}
\end{equation}
We can describe explicitly the kernel and image of $M$:
\begin{eqnarray*}
M(w_1)&=&
\left(
\begin{array}{cccc}
a_{11} (\adj g)_{11}& a_{11} (\adj g)_{12}& \ldots & a_{11} (\adj g)_{1n}\\
\vdots &\vdots & & \vdots\\
a_{n1} (\adj g)_{11}& a_{n1} (\adj g)_{12}& \ldots & a_{n1} (\adj g)_{1n}
\end{array}
\right)
\left(
\begin{array}{c}
a_{11}\\
\vdots \\
a_{n1}
\end{array}
\right)\\ \\
&=&
\left(
\begin{array}{c}
a_{11}\{a_{11} (\adj g)_{11} +a_{21} (\adj g)_{12}+\ldots +a_{n1} (\adj g)_{1n} \}\\
\vdots\\
a_{n1}\{a_{11} (\adj g)_{11} +a_{21} (\adj g)_{12}+\ldots +a_{n1} (\adj g)_{1n} \}
\end{array}
\right) \\ \\
&=& 
\left(
\begin{array}{c}
a_{11}\\
\vdots \\
a_{n1}
\end{array}
\right)=w_1.
\end{eqnarray*}
\\
Hence, $w_1$ is an eigenvector associated to the eigenvalue $1$.

On the other hand, 
\begin{eqnarray*}
M(w_2)&=&
\left(
\begin{array}{cccc}
a_{11} (\adj g)_{11}& a_{11} (\adj g)_{12}& \ldots & a_{11} (\adj g)_{1n}\\
\vdots &\vdots & & \vdots\\
a_{n1} (\adj g)_{11}& a_{n1} (\adj g)_{12}& \ldots & a_{n1} (\adj g)_{1n}
\end{array}
\right)
\left(
\begin{array}{c}
a_{12}\\
\vdots \\
a_{n2}
\end{array}
\right)\\ \\
&=&
\left(
\begin{array}{c}
a_{11}\{a_{12} (\adj g)_{11} +a_{22} (\adj g)_{12}+\ldots +a_{n2} (\adj g)_{1n} \}\\
\vdots\\
a_{n1}\{a_{12} (\adj g)_{11} +a_{22} (\adj g)_{12}+\ldots +a_{n2} (\adj g)_{1n} \}
\end{array}
\right) \\ \\
&=& 
\left(
\begin{array}{c}
0\\
\vdots \\
0
\end{array}
\right).
\end{eqnarray*}
\\
Hence, $w_2$ is in the kernel of $M$. Analogously, we verify that $w_2,\ldots,w_n$ are in the kernel of $M$ 
(and therefore they are eigenvectors associated to the zero eigenvalue).
As a consequence, we obtain the embedding
\begin{equation}
\varphi:G\cdot (v_o\otimes \varepsilon_0) \rightarrow \mathbb{P}(\mathbb{C}^n)\times G_{n-1}(\mathbb{C}^n)
\end{equation}
given by $\varphi(g\cdot (v_0\otimes \varepsilon_0 ))=(\mathrm{span}\{w_1\},\mathrm{span}\{ w_2,\dots, w_n \} )$.

Let $P$ be the hyperplane generated by $w_2,\ldots,w_n$. Denote by $\xi_P\in (\mathbb{C}^n)^*$ 
the linear functional associated to  $P$ (that is, $P$ is in the kernel of $\xi_P$). Direct calculations show that
 $$\xi_P=((\adj g)_{11},(\adj g)_{12},\ldots,(\adj g)_{1n} ).$$
The correspondence  $P\mapsto\xi_P$ gives the identification $$G_{n-1}(\mathbb{C}^n)\rightarrow \mathbb{P}( (\mathbb{C}^n)^* ).$$
The Segre embedding of the minimal orbit is the composite
\begin{equation}
G\cdot (v_0\otimes \varepsilon_0 )\rightarrow \mathbb{P}(\mathbb{C}^n )   \times G_{n-1}(\mathbb{C}^n)\rightarrow \mathbb{P}(\mathbb{C}^n ) \times \mathbb{P}((\mathbb{C}^n)^* ) \rightarrow \mathbb{P}^{n^2-1}.
\end{equation}
The coordinates of the image of this composite in $\mathbb{P}^{n^2-1}$ are the entries of matrix $(\ref{mat-tensor})$.

Observe that the complement of the adjoint orbit in  $\mathbb{P}(\mathbb{C}^n ) \times \mathbb{P}((\mathbb{C}^n)^* )$ 
is the   {\it incidence correspondence variety} $\Sigma$'' (see \cite[Ex. 6.12]{Ha}) given by
\begin{equation}
\Sigma = \{  (w,\xi ): \, \langle w,\xi \rangle =0   \}\subset \mathbb{P}(\mathbb{C}^n ) \times \mathbb{P}((\mathbb{C}^n)^* ) \text{.}
\end{equation}

\noindent{\bf The rational map: }
\\

Let $H=\Diag(\lambda_1,\ldots,\lambda_n)\in \mathfrak{h}$, with $\lambda_1>\ldots>\lambda_n $ and
 $\lambda_1 + \ldots +\lambda_n=0$ (where $\mathfrak{h}$ is the  Cartan subalgebra of  $\mathfrak{sl}(n,\mathbb{C})$).

We  describe a rational map (factored through the  Segre embbeding), that coincides with the potential $f_H$ 
on the adjoint orbit. Such a rational map is given by 
\begin{equation}
 \psi:\mathbb{P}^{n-1}\times G_{n-1}(\mathbb{C}^n)\rightarrow \mathbb{P}^1, 
\end{equation} 
\begin{equation}
\psi([v],[\varepsilon]) = \frac{\tr((v\otimes \varepsilon) \rho(H))}{\tr(v\otimes \varepsilon)}=\frac{\sum_{i=1}^n{\lambda_i a_{i1} (\adj g)_{1i} }}{\sum_{i=1}^n{ a_{i1} (\adj g)_{1i} }},
\end{equation}
where the identification $([v],[\varepsilon])\mapsto v\otimes \varepsilon$  is described in  \cite[Sec. 4.2]{GGSM2}. Observe that if  $([v],[\varepsilon])$ belongs to the adjoint orbit, then $\tr(v\otimes\varepsilon)=1$ (see eq. \ref{det1}). 
Furthermore, the complement of the orbit is the  {\it incidence correspondence variety} $\Sigma$', that is, 
the set of pairs $(\ell,P)$ such that
\begin{equation}
0\subset \ell\subset P \subset\mathbb{C}^n,
\end{equation}
where $P$ is a hyperplane in  $\mathbb{C}^n$ and $\ell\subset P$ is a line. 
The variety  $\Sigma$ is classically denoted in Lie theory as the flag  manifold $\mathbb{F}(1,n-1)$. 


\begin{thebibliography}{WWW}

\bibitem[AAK]{AAK} Abouzaid, M.\thinspace; Auroux, D.\thinspace; Katzarkov, L.\thinspace;
\textit{Lagrangian fibrations on blowups of toric varieties and mirror symmetry for hypersurfaces},
arXiv 1205.0053. 
\bibitem[BCG]{BCG} Ballico, E.\thinspace; Callander, B.\thinspace; Gasparim, E.\thinspace; {\it Compactifications of adjoint orbits and their Hodge diamonds}, preprint. 
\bibitem[BG]{BG} Ballico, E.\thinspace; Gasparim, E.\thinspace; \textit{Projectivisations of adjoint orbits}, preprint. 
  \bibitem[Cl]{Cl}
    Clarke, P.\thinspace;
    {\it Duality for Toric  Landau--Ginzburg Models},
    2008,
    arXiv:0803.0447.

\bibitem[Ha]{Ha}Harris, J.\thinspace; \textit{Algebraic Geometry: A first course.}, Graduate Texts in Mathematics,
Springer--Verlag (1992).
\bibitem[KKP]{KKP}  Katzarkov, L.\thinspace ; , Kontsevich, M.\thinspace ;  Pantev, T.\thinspace; 
\textit{Bogomolov--Tian--Todorov theorems for Landau--GInzburg models}, arXiv: 1409.5996.
  \bibitem[Ko]{Ko}
    Kontsevich, M.\thinspace
    {\it Homological algebra of Mirror Symmetry},
    Proc. International Congress of Mathematicians (Zurich, 1994) Birkh\"auser, Basel (1995) 120--139.
\bibitem[GGSM1]{GGSM1}Gasparim, E.\thinspace; Grama, L.\thinspace; San Martin, L.A.B.\thinspace; {\it Symplectic Lefschetz fibrations on adjoint orbits}, to appear in 
Forum Math. 
\bibitem[GGSM2]{GGSM2} Gasparim, E.\thinspace; Grama, L.\thinspace; San Martin, L.A.B.\thinspace; {\it Adjoint orbits of semisimple Lie groups and Lagrangean submanifolds}, to appear in the Proc. 
Edinburgh Math. Society. 
  \bibitem[Se]{Se}
    Seidel, P.\thinspace; 
   {\it Fukaya categories and Picard--Lefschetz theory},
    Zurich Lectures in Advanced Mathematics, European Math. Soc., Zurich (2008).

\bibitem[SM]{amalglie} San Martin, L.A.B.\thinspace: \'{A}lgebras de Lie, $2^a$ edi\c{c}\~ao, Editora  Unicamp, Campinas (2010).

\end{thebibliography}
\end{document}